\documentclass{article} 
\usepackage{iclr2027_conference,times}
\iclrfinalcopy

\usepackage{natbib}
\usepackage{booktabs}

\usepackage{amsmath,amsthm,amssymb}
\usepackage{bm}
\usepackage{algorithm}
\usepackage{algpseudocode}
\usepackage{dsfont}

\theoremstyle{plain}
\newtheorem{theorem}{Theorem}[section]
\newtheorem{lemma}[theorem]{Lemma}
\newtheorem{corollary}[theorem]{Corollary} 
\newtheorem{proposition}[theorem]{Proposition}

\theoremstyle{definition}
\newtheorem{definition}[theorem]{Definition}

\newtheorem{problem}[theorem]{Problem}

\theoremstyle{remark}
\newtheorem{remark}[theorem]{Remark}

\usepackage{graphicx}
\usepackage{subcaption}
\usepackage{xcolor,hyperref}
\definecolor{darkblue}{rgb}{0.0,0.0,0.6}
\hypersetup{colorlinks,breaklinks,linkcolor=darkblue,urlcolor=darkblue,anchorcolor=darkblue,citecolor=darkblue}

\title{Cost-Sensitive Online Window Size Selection for Portfolio Management}

\author{Yi-Chen Liu\thanks{
        Interdisciplinary Program of Management and Technology, National Tsing Hua University
    } \; and 
    Chung-Han Hsieh\thanks{ 
        Department of Quantitative Finance, National Tsing Hua University, Taiwan. (\href{mailto:ch.hsieh@mx.nthu.edu.tw}{ch.hsieh@mx.nthu.edu.tw})
       }
    }
       
\date{}

\providecommand{\keywords}[1]
{
  \small	
  \textbf{\textit{Keywords---}} #1
}

\begin{document}

\maketitle

\begin{abstract}
    This paper investigates cost-sensitive online window size selection for portfolio management under changing market conditions. 
    Specifically, we propose a two-level framework that constructs portfolios using candidate window sizes and dynamically aggregates them through online learning. 
    By treating candidate window sizes as ``experts,'' we dynamically update their aggregation weights using turnover-inclusive losses. 
    Moreover, we derive finite-horizon cost-sensitive tracking-regret bounds that account for turnover of the aggregated portfolio, with static regret as a special case. 
    Under bounded losses and cost rates, suitably tuned Fixed Share achieves asymptotically no tracking regret for sublinear switching budgets, with Hedge covering the static case.
\end{abstract}

\keywords{Online Learning, Optimal Sliding Window, Fixed Share Algorithm, Portfolio Optimization, Financial Optimization Algorithm}

\section{Introduction}
A fundamental issue in financial forecasting is how much historical information to retain. 
Long windows can smooth short-term noise and capture persistent patterns, whereas short windows can respond more rapidly to recent information.
This trade-off is particularly important in rolling portfolio management, where the choice of estimation window directly affects portfolio decisions.
When portfolios constructed from different estimation windows are combined online, changing their aggregation weights can induce turnover even if the individual portfolios remain unchanged.
Hence, each window expert's transaction costs do not, by themselves, account for the cost of the aggregate portfolio.
This raises the question of how to track changing expert performance while controlling regret that includes the turnover of the aggregate portfolio.

In particular, financial time-series forecasting often involves complex and time-varying dynamics, making the choice of estimation window especially consequential; see, e.g., \cite{peters1994fractal}
Historically, window sizes in financial applications were often chosen empirically or based on forecasting performance. 
For example, \cite{molodtsova2009out} used a fixed 10-year rolling window for monthly exchange-rate forecasting, and \cite{stock2007has} used quarterly units to predict inflation. 
\cite{pesaran2007selection} proposed a forecaster that weighted averages with different window sizes to mitigate model uncertainty, a concept detailed in \cite{Pesaran2011forecast} and finalized with optimal weighting in \cite{pesaran2013optimal}. 
\cite{rossi2012out} generally discussed the topic and provided a statistically robust window selection method. 
\cite{inoue2017rolling} identified the solution for window size that is asymptotically equivalent to minimizing Mean Squared Forecast Error (MSFE), which is theoretically unsolvable. 
However, these approaches do not formulate window-size selection as a sequential learning problem that adapts to evolving markets with performance guarantees.

Dynamic window selection has also been considered in financial prediction and portfolio problems.
A common approach is to evaluate several candidate windows and select the best-performing one. 
For example, \cite{jeon2017time} dynamically weighted different sub-window sizes via probability distributions to predict correlations between financial instruments, although the resulting computational burden limits frequent updating. 
\cite{rajabi2022mlp} used multi-layer perceptrons to determine dynamic windows for Bitcoin prediction, but focused on very short time horizons. 
Thus, a general framework for online window-size adaptation with rigorous performance guarantees remains lacking.

Parallel to this literature, online portfolio selection has developed extensively since the Universal Portfolio of \cite{cover1991universal}, itself motivated by Kelly's growth-optimal betting framework \cite{kelly1956new}. 
Follow-the-Winner methods favor assets or portfolios that have performed well historically. 
Examples include Exponential Gradient (EG) \cite{helmbold1998line}, the Aggregating Algorithm \cite{vovk1998universal}, and Variable Rebalanced Portfolios (VRP) \cite{gaivoronski2000stochastic}. 
By contrast, Follow-the-Loser methods exploit mean reversion, including Passive Aggressive Mean Reversion (PAMR) \cite{li2012pamr}, Confidence Weighted Mean Reversion (CWMR) \cite{li2013confidence}, and OLMAR \cite{li2014online}.

However, how to choose the window size, an important parameter for time series prediction, is less emphasized in online portfolio selection. \cite{gaivoronski2000stochastic} points out the need to consider sliding window size.
\cite{gaivoronski2003line} studies adaptive portfolio selection with transaction costs.
\cite{li2012olmar} propose a window-size-sensitive model OLMAR for trading the portfolio. 
However, that work does not establish a tracking-regret bound that accounts for turnover of the aggregated portfolio.

We formulate window-size adaptation as the online aggregation of sliding-window portfolio experts within the prediction-with-expert-advice framework~\cite{cesa2006prediction}.
Each candidate window generates a portfolio, and the algorithm combines these portfolios using weights updated from the experts' turnover-inclusive losses.
We employ Fixed Share~\cite{herbster1998tracking} to track changing expert performance, with Hedge~\cite{littlestone1994information}
as the static-regret baseline.

Our main contribution is a finite-horizon tracking-regret bound that accounts for turnover of the deployed aggregate portfolio.
The comparator is the best expert sequence under a prescribed switching budget, with each selected expert evaluated by its own turnover-inclusive loss.
The resulting bound makes the dependence on the transaction-cost rate and switching budget explicit, yields a cost-dependent learning-rate choice, and recovers the static-regret bound for Hedge as a special case.

\section{Problem Formulation} 
    Consider a portfolio $\mathcal{M}:=\{1,\dots, m\}$ with $m\geq2$ assets, including a risk-free asset. 
    Let $S_i(t)>0$ denote the adjusted closing price of asset $i\in\mathcal M$ at time $t\ge1$. 
    For asset $i\in\mathcal{M}$, its return at time $t$ is denoted by 
    \[
    y_{i}(t):=\frac{S_{i}(t+1)-S_{i}(t)}{S_{i}(t)}.
    \]
    Let $\mathbf{y}(t):=[y_{1}(t),\dots,y_{m}(t)]^\top\in\mathbb{R}^m$ be the return vector for $m$ assets at time $t\geq 1$.

\subsection{Sliding-Window Expert and Cost-Sensitive Mean-Variance Model}
We extend online window-size selection within the expert-advice framework \cite{cesa2006prediction, orabona2026modern} to account for turnover induced by both expert portfolio updates and changes in aggregation weights.
Let~$\mathcal{N}$ be a finite set of $n\geq 2$ candidate window sizes, where each $j \in \mathcal{N}$ satisfies $j\geq 2$ and is treated as an \emph{expert}. 
For expert $j\in\mathcal{N}$, consider Markowitz's Mean-Variance (MV) optimization problem with turnover costs to obtain the advice $\mathbf{w}_{j}(t) \in \mathcal{W}$; the weight vector optimizes the last $j$-days of data prior to time $t$; see \cite{markowitz1952portfolio,luenberger2013investment}. 
Given an initial history of $\max\mathcal N$ returns, define the rolling estimators for expert $j\in\mathcal N$~by
\begin{align*}
    \widehat{\bm{\mu}}_j(t)
    &:=
    \frac{1}{j}\sum_{s=t-j}^{t-1}\mathbf{y}(s)
    \in \mathbb{R}^m,\\
    \widehat{\Sigma}_j(t)
    &:=
    \frac{1}{j-1}
    \sum_{s=t-j}^{t-1}
    \bigl(\mathbf{y}(s)-\widehat{\bm{\mu}}_j(t)\bigr)
    \bigl(\mathbf{y}(s)-\widehat{\bm{\mu}}_j(t)\bigr)^\top
    \in\mathbb{S}_+^m.
\end{align*}
Here $\mathbb S_+^m$ denotes the set of real symmetric positive semidefinite $m\times m$ matrices.

\begin{problem}[Cost-Sensitive Mean-Variance Optimization]
\label{problem: cost-sensitive mv problem}
    Let $\mathbf{w}_j(t-1) \in \mathcal{W}$ denote the portfolio weights held by expert $j$ immediately before rebalancing at time $t$, with $\mathbf{w}_j(0):=\mathbf{w}_0$ for a prescribed initial portfolio $\mathbf{w}_0 \in \mathcal{W}$, and let $c(t) \geq 0$ be the proportional transaction cost rate. 
The \emph{cost-sensitive mean-variance} model for expert $j$ can be written as 
    \begin{align} \label{problem: cost-sensitive MV}
        \max_{\mathbf{w} \in \mathcal{W}} \;\widehat{\bm{\mu}}_j(t)^\top\mathbf{w}
        -
        \frac{1}{2} \mathbf{w}^\top\widehat{\Sigma}_j(t)\mathbf{w}
        - 
        c(t) \|\mathbf{w} - \mathbf{w}_j(t-1)\|_1, \qquad j \in \mathcal{N}
    \end{align}
where
    $
    \mathcal{W} 
    := \left\{
        \mathbf{w} \in \mathbb{R}_+^m : \mathbf{w}^\top \mathbf{1} = 1 \right\}.
    $
Here $\mathbf1$ denotes the all-ones vector of the appropriate dimension.
\end{problem}    

Let $\mathbf{w}_{j}(t)$, the optimal advice solving the problem for $j$-day data at time $t$, be the advice for expert $j$. 
Note that Problem~\ref{problem: cost-sensitive mv problem}  is a concave program, which can be solved efficiently by a standard solver such as CVXPY; see \cite{diamond2016cvxpy}.

After obtaining the expert advice, we assign an \emph{aggregation weight vector} in the simplex:
$$
    \mathbf{q}(t) \in \Delta_n 
    := 
    \left\{ 
        \mathbf{q}\in\mathbb{R}_+^n:
        \sum_{j\in\mathcal{N}}q_j=1
    \right\}
    \subseteq \mathbb{R}^n
    $$
and form the \emph{aggregateportfolio}
\[
\widehat{\mathbf{w}}(t)
:=
\sum_{j\in\mathcal{N}}q_j(t)\mathbf{w}_j(t).
\]
Thus, the method dynamically aggregates the window experts rather than necessarily selecting a single window. 
Here $\mathbf w_j(t)$ and $\widehat{\mathbf w}(t)$ are portfolio weight vectors over assets, whereas $\mathbf q(t)$ is the aggregation weight vector over window experts.

\subsection{Tracking Regret}
\label{subsec: regret analysis}
Consider a sequential decision-making framework over a discrete time horizon $t=1,2, \dots$. 
Let
    $
    \ell:\mathcal W\times\mathcal Y\to\mathbb R
    $
be a loss function, assumed to be convex in its first argument. 
At each time $t$, the algorithm first selects a decision  
    $
    \widehat{\mathbf{w}}(t)  \in \mathcal{W}
    $. 
After the decision is made, the market outcome $\mathbf{y}(t) \in \mathcal{Y} \subseteq \mathbb{R}^m$ is revealed, and the algorithm incurs the prediction loss
    $
    \ell(\widehat{\mathbf{w}}(t), \mathbf{y}(t)).
    $
    
Recall that each $j \in \mathcal{N}$ indexes the expert associated with a $j$-day sliding window; at each time $t$, this expert provides advice $\mathbf{w}_{j}(t) \in \mathcal{W}$.
For a finite horizon $T \geq 1$, define the set of all expert sequences over $T$ periods~as
    \[ 
    \mathcal{N}^T := \underbrace{\mathcal N\times\cdots\times\mathcal N}_{T\ \mathrm{times}}. 
    \] 
Write
    $ 
    \mathbf{j}:=(j_1,\dots,j_T)\in\mathcal N^T 
    $
for an arbitrary comparator sequence of experts, where $j_t$ is the expert selected by the comparator at time $t$.
For an expert sequence $\mathbf{j} = (j_1,\dots,j_T)\in\mathcal{N}^T$, define its number of switches by 
    \[ 
    S_T(\mathbf j) := \sum_{t=1}^{T-1}\mathds{1}_{\{j_t\neq j_{t+1}\}}, 
    \]
where $\mathds{1}_{A}$ denotes the indicator function, equal to $1$ if the event $A$ occurs and $0$ otherwise.
For a switching budget $K \in \{0,1,\dots, T-1\}$, define 
    $
    \mathcal J_{T,K} := \left\{ \mathbf j\in\mathcal{N}^T: S_T(\mathbf j)\leq K \right\}, 
    $
which represents the \emph{comparator class}, consisting of all expert sequences that switch at most $K$ times over the horizon $T$.
The switching budget $K$ is prescribed for each horizon $T$ and may depend on $T$. 
We use $K$ in finite-horizon statements and write $K(T)$ when considering the limit $T\to\infty$.
In asymptotic statements, ``fixed $K$'' means that the budget is independent of $T$.

\begin{definition}
    For a given horizon $T\geq 1$ and a switching budget $K\in\{0,1,\ldots,T-1\}$, the \emph{tracking regret} with switching budget $K$ is defined as
    \[
    R_{\rm track}(T,K)
    :=
    \sum_{t=1}^T \ell(\widehat{\mathbf w}(t),\mathbf{y}(t))
    -
    \min_{\mathbf j\in\mathcal J_{T,K}}
    \sum_{t=1}^T \ell(\mathbf w_{j_t}(t),\mathbf{y}(t)).
    \]
\end{definition}

Each sequence $\mathbf j\in\mathcal J_{T,K}$ partitions
$\{1,\ldots,T\}$ into at most $K+1$ consecutive blocks, within each of
which the comparator follows one fixed expert. 
The minimization over $\mathcal{J}_{T,K}$ selects the
best such expert sequence in hindsight; see \cite{cesa2006prediction}.

\begin{remark}[Static Regret as a Special Case]
When $K=0$, the comparator must follow one fixed expert throughout the
horizon; i.e., $\mathcal{J}_{T,0} = \{\mathbf{j}=(j,j,\dots,j) \in \mathcal{N}^T: j \in \mathcal{N}\}$. 
Hence, static regret is the special~case
\[
R_{\rm static}(T)
:=
R_{\rm track}(T,0)
=
\sum_{t=1}^T
\ell(\widehat{\mathbf w}(t),\mathbf y(t))
-
\min_{j\in\mathcal N}
\sum_{t=1}^T
\ell(\mathbf w_j(t),\mathbf y(t)).
\]
\end{remark}

\subsection{Regret Minimization Problem}
Our goal is to dynamically aggregate window-size experts and control the cost-sensitive tracking regret relative to the best expert sequence under a prescribed switching budget.
We explicitly account for the turnover of the aggregated portfolio in the regret criterion. 
Assume further that $a\leq\ell(\mathbf w,\mathbf y)\leq b$ for all $(\mathbf w,\mathbf y)\in\mathcal W\times\mathcal Y$, where $a<b$ are fixed finite constants.
Let $c(t)\geq0$ denote the \emph{transaction cost rate}. 
We now formalize our main problem.

\begin{problem}[Cost-Sensitive Regret Minimization]
\label{problem: cost-sensitive regret minimization}
Define the \emph{cost-sensitive loss of expert $j$ at time $t$} as their prediction loss plus their incurred transaction cost:
$$
\tilde{\ell}_j(t) 
:= 
\ell(\mathbf{w}_j(t), \mathbf{y}(t)) + c(t)\Vert\mathbf{w}_j(t) - \mathbf{w}_j(t-1)\Vert_1.
$$
Given a time horizon $T\ge1$ and a switching budget $K\in\{0,\ldots,T-1\}$, our goal is to select $\mathbf q(t)\in\Delta_n$ before $\mathbf y(t)$ is revealed and establish a worst-case upper bound on the resulting \emph{cost-sensitive tracking regret}:
\begin{align*}
R_{\rm track}^{c}(T,K)
:= &
\sum_{t=1}^T
\left[
    \ell(\widehat{\mathbf w}(t),\mathbf y(t))
    +c(t)\|\widehat{\mathbf w}(t)-\widehat{\mathbf w}(t-1)\|_1
\right]
-
\min_{\mathbf j\in\mathcal J_{T,K}}
\sum_{t=1}^T\tilde{\ell}_{j_t}(t),
\end{align*}
where $\widehat{\mathbf w}(0)=\mathbf w_0$ and $\mathcal J_{T,K}$ consists of all expert sequences that switch at most $K$ times over the horizon $T$.
\end{problem}

\paragraph{Hannan Consistency for Cost-Sensitive Tracking Regret.}
Beyond finite-horizon regret bounds, we seek asymptotic no regret. 
We formalize this objective through a cost-sensitive tracking analog of Hannan consistency~\cite{hannan1957approximation}.

\begin{definition}[Cost-Sensitive Hannan Consistency]
Given a prescribed switching-budget sequence
$K(T)\in\{0,1,\ldots,T-1\}$, we say that an algorithm is \emph{Hannan consistent for cost-sensitive tracking regret} with respect to $\{\mathcal J_{T,K(T)}\}_{T\ge1}$ if, for every outcome sequence
$\{\mathbf y(t)\}_{t\ge1}\subseteq\mathcal Y$
and every transaction-cost sequence
$\{c(t)\}_{t\ge1}\subseteq[0,1]$,
\[
    \limsup_{T\to\infty}
    \frac{R_{\rm track}^{c}(T,K(T))}{T}
    \le0.
\]
\end{definition}

\begin{remark}[Role of the Switching Budget]
    The switching budget restricts the comparator sequences, not the evolution of market returns.
    For asymptotic analysis, we consider prescribed budgets satisfying
    \begin{equation}\label{eq: switch time limit}
        K(T)=o(T).
    \end{equation}
    No-regret guarantees for these budgets depend on the loss assumptions and the algorithm's parameter choices.
\end{remark}

\section{Theoretical Results}

\subsection{Cost-Sensitive Regret Bounds}

Using the cost-sensitive expert losses defined in
Problem~\ref{problem: cost-sensitive regret minimization},
we establish the following regret decomposition
for any aggregation rule.
For notational convenience, set $\mathbf q(0):=\mathbf q(1)$.

\begin{theorem}[Cost-Sensitive Tracking Regret Bound]
\label{thm: regret_bound_tc}
    For any time horizon $T\ge1$, let $c(t) \ge 0$ be a transaction cost rate at time $t$. 
    Consider any algorithm $\mathcal{A}$ that generates aggregation weights $\mathbf{q}(t) \in \Delta_n$. 
    For any minimizing benchmark sequence of experts in $\mathcal{J}_{T,K}$, the cost-sensitive tracking regret $R_{\rm{track}}^c(T, K) $~satisfies
    \begin{align} \label{cost-sensitive regret bound}
        R_{\rm{track}}^c(T, K)  
        \leq 
        \underbrace{\sum_{t=1}^T \left[ 
            \left( \sum_{j\in\mathcal{N}} q_j(t) \tilde{\ell}_j(t) \right) - \tilde{\ell}_{j_t}(t) \right]}_{\text{Expert-loss regret bound}} 
        + 
        \underbrace{\sum_{t=1}^T c(t)\| \mathbf{q}(t) - \mathbf{q}(t-1)\|_1}_{\text{Additional turnover cost regret bound}}.
    \end{align}
The coefficient $1$ of the additional turnover cost bound is sharp; i.e., it cannot be reduced uniformly over all admissible expert portfolios and aggregation rules.
\end{theorem}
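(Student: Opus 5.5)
The plan is to bound the algorithm's per-period cost-sensitive loss by the $\mathbf q(t)$-weighted average of the experts' cost-sensitive losses plus the extra aggregation turnover, and then subtract the comparator term. Fix a minimizing sequence $\mathbf j=(j_1,\dots,j_T)\in\mathcal J_{T,K}$, so that $\min_{\mathbf j\in\mathcal J_{T,K}}\sum_t\tilde\ell_{j_t}(t)=\sum_t\tilde\ell_{j_t}(t)$. It then suffices to prove, for every $t$,
\[
\ell(\widehat{\mathbf w}(t),\mathbf y(t))+c(t)\|\widehat{\mathbf w}(t)-\widehat{\mathbf w}(t-1)\|_1
\le \sum_{j\in\mathcal N}q_j(t)\tilde\ell_j(t)+c(t)\|\mathbf q(t)-\mathbf q(t-1)\|_1 ,
\]
and then sum over $t$.

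\emph{Prediction part.} Since $\ell$ is convex in its first argument and $\widehat{\mathbf w}(t)=\sum_j q_j(t)\mathbf w_j(t)$ with $\mathbf q(t)\in\Delta_n$, Jensen's inequality gives $\ell(\widehat{\mathbf w}(t),\mathbf y(t))\le\sum_j q_j(t)\ell(\mathbf w_j(t),\mathbf y(t))$.

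\emph{Turnover part.} For $t\ge2$, I add and subtract $\sum_j q_j(t)\mathbf w_j(t-1)$ to get the decomposition
\[
\widehat{\mathbf w}(t)-\widehat{\mathbf w}(t-1)=\sum_j q_j(t)\bigl(\mathbf w_j(t)-\mathbf w_j(t-1)\bigr)+\sum_j\bigl(q_j(t)-q_j(t-1)\bigr)\mathbf w_j(t-1).
\]
The triangle inequality, together with $\|\mathbf w_j(t-1)\|_1=1$ (because $\mathbf w_j(t-1)\in\mathcal W$), yields
\[
\|\widehat{\mathbf w}(t)-\widehat{\mathbf w}(t-1)\|_1\le\sum_j q_j(t)\|\mathbf w_j(t)-\mathbf w_j(t-1)\|_1+\|\mathbf q(t)-\mathbf q(t-1)\|_1 .
\]
For $t=1$ I check the conventions separately. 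We have $\widehat{\mathbf w}(0)=\mathbf w_0=\mathbf w_j(0)$ for every $j$, hence $\widehat{\mathbf w}(0)=\sum_j q_j(1)\mathbf w_j(0)$. With $\mathbf q(0)=\mathbf q(1)$, the same decomposition holds and its second sum vanishes. Multiplying by $c(t)\ge0$, adding the prediction bound, and recognizing $\tilde\ell_j(t)$ gives the per-period inequality. This bookkeeping at $t=1$ is the only delicate point of the upper bound.

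\emph{Sharpness.} The main step is to exhibit a configuration in which every inequality above is an equality while the aggregation turnover is strictly positive. For any coefficient $\gamma<1$ in front of the second sum, the bound would then fail. I would take the linear loss $\ell(\mathbf w,\mathbf y)=-\mathbf y^\top\mathbf w$ on a bounded $\mathcal Y$, which is convex and bounded, so Jensen's inequality holds with equality. Take $n=2$ experts whose portfolios are constant in time with disjoint supports, e.g.\ $\mathbf w_1(t)=\mathbf e_1$ and $\mathbf w_2(t)=\mathbf e_2$ for $t\ge1$. Choose any rule with $\mathbf q(2)\ne\mathbf q(1)$ and $c(2)>0$.

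For $t\ge2$ the expert turnover terms vanish, and disjointness of supports gives $\|\widehat{\mathbf w}(t)-\widehat{\mathbf w}(t-1)\|_1=\sum_j|q_j(t)-q_j(t-1)|=\|\mathbf q(t)-\mathbf q(t-1)\|_1$ exactly. At $t=1$ both sides agree by the triangle-free computation: there the second term is zero and the expert-turnover term is the exact $\ell_1$ distance, since the supports are disjoint. Summing, the regret equals the right-hand side of \eqref{cost-sensitive regret bound}. Because $c(2)\|\mathbf q(2)-\mathbf q(1)\|_1>0$, replacing the coefficient $1$ by any $\gamma<1$ makes the stated inequality false.

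If ``admissible'' is meant to require that the $\mathbf w_j(t)$ actually solve Problem~\ref{problem: cost-sensitive mv problem}, I would choose the return history so that each window's mean-variance optimum is a vertex. For example, one asset dominates in the recent short window and another in the long window, with a large mean gap. This realizes the disjoint-support example, and I would treat that as the remaining technical check.
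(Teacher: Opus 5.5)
Your upper bound is the paper's argument: Jensen for the prediction term, adding and subtracting $\sum_j q_j(t)\mathbf w_j(t-1)$, the triangle inequality with $\|\mathbf w_j(t-1)\|_1=1$, then subtracting a fixed minimizing sequence. Your check that $\widehat{\mathbf w}(0)=\sum_j q_j(1)\mathbf w_j(0)$ makes explicit a $t=1$ step the paper uses silently. The sharpness part, however, has a genuine gap at $t=1$. The equality you need there is
\[
\Bigl\|\sum_{j} q_j(1)\bigl(\mathbf e_j-\mathbf w_0\bigr)\Bigr\|_1=\sum_{j} q_j(1)\,\|\mathbf e_j-\mathbf w_0\|_1 .
\]
Disjointness of $\mathbf e_1,\mathbf e_2$ does not give this, because the vectors $\mathbf e_j-\mathbf w_0$ overlap on the support of $\mathbf w_0$ and can cancel there. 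Take two assets, $\mathbf w_0=(\tfrac12,\tfrac12)$ and $\mathbf q(1)=(\tfrac12,\tfrac12)$, which is the uniform Fixed Share start. Then the left side is $0$ and the right side is $1$. In fact, with two assets equality holds only when $\mathbf w_0$ or $\mathbf q(1)$ is a vertex. If $c(1)>0$ in such an instance, the regret falls short of the bound by $c(1)$. The bound with coefficient $\gamma<1$ therefore still holds there whenever $(1-\gamma)\sum_{t=2}^T c(t)\|\mathbf q(t)-\mathbf q(t-1)\|_1\le c(1)$. So your claim that every rule with $\mathbf q(2)\neq\mathbf q(1)$ and $c(2)>0$ defeats every $\gamma<1$ is false as stated, since you leave $c(1)$, $\mathbf w_0$ and $\mathbf q(1)$ unconstrained.

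The repair is easy, because sharpness needs only one configuration. Set $c(1)=0$, which is what the paper does; it takes $T=2$, $\ell\equiv0$, $\mathbf q(1)=(1,0)$, $\mathbf q(2)=(0,1)$, so the regret and the bound both equal $2c(2)$. Alternatively, take $\mathbf q(1)$ or $\mathbf w_0$ to be a vertex. With that choice your $t\ge2$ computation, which is correct, completes the argument, and your linear loss works as well as the paper's $\ell\equiv0$. Your closing remark about realizing the experts as actual solutions of Problem~\ref{problem: cost-sensitive mv problem} goes beyond what the paper verifies; its example treats $\mathbf w_j(t)$ as arbitrary points of $\mathcal W$.
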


\begin{proof}
Let $\widehat{\mathbf{w}}(t) = \sum_{j\in\mathcal{N}} q_j(t) \mathbf{w}_j(t)$ denote the aggregate portfolio. Define the combined loss of expert $j$ at time $t$ as their prediction loss plus their incurred transaction cost at step $t$:
    $$
    \tilde{\ell}_j(t) = \ell(\mathbf{w}_j(t), \mathbf{y}(t)) + c(t)\Vert\mathbf{w}_j(t) - \mathbf{w}_j(t-1)\Vert_1.
    $$
Because the original loss is bounded in $[a, b]$ and the maximum $l_1$ distance between any two vectors on the probability simplex is $2$, the combined loss at time $t$ lies in the interval $\tilde{\ell}_j(t) \in [a, b+2c(t)]$. 
The cost-sensitive tracking regret against a minimizing benchmark sequence of optimal experts $j_1, \dots, j_T$ with at most~$K$ shifts is defined as:
\begin{align}
\label{eq: cost-sensitive tracking regret}
    R_{\rm{track}}^c(T, K)  = \sum_{t=1}^T \left[ \ell(\widehat{\mathbf{w}}(t), \mathbf{y}(t)) + c(t)\Vert\widehat{\mathbf{w}}(t) - \widehat{\mathbf{w}}(t-1)\Vert_1 \right] - \sum_{t=1}^T \tilde{\ell}_{j_t}(t).
\end{align}
    
Since the loss function $\ell$ is convex in its first argument, Jensen's inequality guarantees that:
\begin{align}
\label{ineq: jensen}
    \ell(\widehat{\mathbf{w}}(t), \mathbf{y}(t)) 
    = \ell\left(\sum_{j\in\mathcal{N}} q_j(t) \mathbf{w}_j(t), \mathbf{y}(t)\right) 
    \le \sum_{j\in\mathcal{N}} q_j(t) \ell(\mathbf{w}_j(t), \mathbf{y}(t)).
\end{align}

On the other hand, observe that $c(t)\Vert\widehat{\mathbf{w}}(t) - \widehat{\mathbf{w}}(t-1)\Vert_1 $ in \eqref{eq: cost-sensitive tracking regret}
can be written as    
\begin{align*}
    c(t)\Vert\widehat{\mathbf{w}}(t) - \widehat{\mathbf{w}}(t-1)\Vert_1 
    &= c(t) \left\| \sum_{j\in\mathcal{N}} q_j(t) \mathbf{w}_j(t) - \sum_{j\in\mathcal{N}} q_j(t-1) \mathbf{w}_j(t-1) \right\|_1 .
\end{align*}
By adding/subtracting $\sum q_j(t)\mathbf{w}_j(t-1)$ yields:
\begin{align}
    & c(t)\Vert\widehat{\mathbf{w}}(t) - \widehat{\mathbf{w}}(t-1)\Vert_1 \notag\\
    &= c(t) \left\| \sum_{j\in\mathcal{N}} q_j(t) \mathbf{w}_j(t) 
    -\sum q_j(t)\mathbf{w}_j(t-1)
    +\sum q_j(t)\mathbf{w}_j(t-1)
    - \sum_{j\in\mathcal{N}} q_j(t-1) \mathbf{w}_j(t-1) \right\|_1 \notag\\
    &= c(t) \left\| \sum_{j\in\mathcal{N}} q_j(t) \left( \mathbf{w}_j(t) - \mathbf{w}_j(t-1) \right) + \sum_{j\in\mathcal{N}} \left( q_j(t) - q_j(t-1) \right) \mathbf{w}_j(t-1) \right\|_1 \notag\\
    &\le c(t)\sum_{j\in\mathcal{N}} q_j(t) \Vert\mathbf{w}_j(t) - \mathbf{w}_j(t-1)\Vert_1 + c(t)\sum_{j\in\mathcal{N}} \vert q_j(t) - q_j(t-1)\vert \Vert\mathbf{w}_j(t-1)\Vert_1. \notag \\
    &= 
    c(t)\sum_{j\in\mathcal{N}} q_j(t) \Vert\mathbf{w}_j(t) - \mathbf{w}_j(t-1)\Vert_1 
    + c(t)\| \mathbf{q}(t) - \mathbf{q}(t-1)\|_1,\label{ineq: c weight variation}
\end{align}
where the second-to-last inequality follows from the triangle inequality and $q_j(t) \in [0,1]$.

Because each expert's portfolio $\mathbf{w}_j(t-1) \in \mathcal{W}$, which lies on the probability simplex, its $l_1$-norm is exactly~$1$. 
Therefore, combining the \eqref{ineq: jensen} and the \eqref{ineq: c weight variation} allows us to reconstruct the expected combined loss $\tilde{\ell}_j(t)$:
    \begin{align}
    \label{ineq: aux_3}
        \ell(\widehat{\mathbf{w}}(t), \mathbf{y}(t)) + c(t)\Vert\widehat{\mathbf{w}}(t) - \widehat{\mathbf{w}}(t-1)\Vert_1 \le \sum_{j\in\mathcal{N}} q_j(t) \tilde{\ell}_j(t) + c(t)\| \mathbf{q}(t) - \mathbf{q}(t-1)\|_1.
    \end{align}
After substituting \eqref{ineq: aux_3} into \eqref{eq: cost-sensitive tracking regret} and rearranging, the statement is proved.

To establish sharpness, consider two assets and two experts,
relabeled as $1$ and $2$ for this example, with $T=2$,
$\ell\equiv0$, and $c(1)=0<c(2)$.
Let the two experts hold $\mathbf e_1$ and $\mathbf e_2$,
respectively, at both periods; i.e., $\mathbf w_1(t) = \mathbf e_1$ and $\mathbf w_2(t) = \mathbf e_2$ for all $t=1,2$. 
Take
$\mathbf q(1)=(1,0)$ and $\mathbf q(2)=(0,1)$
Then 
$\widehat{\mathbf{w}}(1) = \sum_{j=1}^2 q_j(1) \mathbf w_j(1) = \mathbf e_1$ 
and
$\widehat{\mathbf{w}}(2) = \sum_{j=1}^2 q_j(2) \mathbf w_j(2) =\mathbf e_2$. 
Hence, for $j=1,2,$
\begin{align} \label{eq: cost-sensitive losses for T_2 cases}
    \tilde\ell_j(1)
    =0+c(1)\|\mathbf e_j-\mathbf w_0\|_1=0
    \quad \text{ and } \quad
    \tilde\ell_j(2)
    =0+c(2)\|\mathbf e_j-\mathbf e_j\|_1=0. 
\end{align}
Hence, any admissible benchmark sequence has zero cumulative expert loss:
$\min_{\mathbf j\in\mathcal J_{2,K}}
\sum_{t=1}^2\tilde{\ell}_{j_t}(t) = 0.$
The corresponding cost-sensitive tracking regret
\begin{align*}
    R_{\rm track}^{c}(2,K)
    &= 
    \underbrace{c(1)\|\widehat{\mathbf w}(1)-\widehat{\mathbf w}(0)\|_1}_{=0}
    +
    c(2)\|\widehat{\mathbf w}(2)-\widehat{\mathbf w}(1)\|_1 - 0\\
    &= 
    0+ c(2)\| {\mathbf e}_2-{\mathbf e}_1\|_1\\
    &=2c(2),
\end{align*}
On the other hand, applying the setting above to the derived regret bound~\eqref{cost-sensitive regret bound} yields
\begin{align*} 
    &\sum_{t=1}^2 \left( \sum_{j=1}^2 q_j(t) \tilde{\ell}_j(t) - \tilde{\ell}_{j_t}(t) \right)
    + 
    \sum_{t=1}^2 c(t)\| \mathbf{q}(t) - \mathbf{q}(t-1)\|_1 \\
    &=
   \left(  \tilde{\ell}_1(1)  - \tilde{\ell}_{j_1}(1) \right)
     +
    \left(  \tilde{\ell}_2(2) - \tilde{\ell}_{j_2}(2) \right)
    + c(2)\| \mathbf{q}(2) - \mathbf{q}(1)\|_1\\
    & = 0 + 0 + 2c(2)
\end{align*}
where the last equality holds by~\eqref{eq: cost-sensitive losses for T_2 cases}.
We see hence that the cost-sensitive tracking regret attains the derived bound, which completes the proof.
\end{proof}

\subsection{Fixed Share for Window Selection}
We now specialize the aggregation rule to the Fixed Share algorithm~\cite{herbster1998tracking}, with aggregation weights updated from the turnover-inclusive expert losses $\tilde{\ell}_j(t)$. 
Initialize $q_j(1)=1/n$ for every~$j \in \mathcal{N}$. 
After observing $\mathbf{y}(t)$, the algorithm updates the aggregation weights for stage~$t+1$ in two steps:

\emph{Step 1: Exponential weighting.}
For expert $j$ at time $t$, compute the intermediate weight $q_j^{\mathrm{exp}}(t)$ using the rule:
    \begin{align} \label{eq: intermediate weight}
        q_j^{\mathrm{exp}}(t)
        =
        \frac{q_j(t)e^{-\eta\tilde{\ell}_j(t)}}
        {\sum_{k\in\mathcal N}q_k(t)e^{-\eta\tilde{\ell}_k(t)}},
    \end{align}
where $\eta>0$ is the learning rate.\footnote{
    The learning rate $\eta$ controls the trade-off between exploiting historically strong experts and adapting to changes in expert performance.
    A higher $\eta$ means faster adaptation.  
    Conversely, a smaller $\eta$ leads to more stable weights that change slowly.
    }

\emph{Step 2: Sharing.}
For a mixing parameter $\alpha \in [0,1]$, retain a $(1-\alpha)$ fraction of the intermediate weight and redistribute the remaining $\alpha$ fraction across  all $n$ experts uniformly:
    \begin{align} \label{eq: fixed share standard update}
        q_j(t+1) 
        &:= (1-\alpha) q_j^{\mathrm{exp}}(t) + \frac{\alpha}{n} \sum_{k\in\mathcal{N}} q_k^{\mathrm{exp}}(t) \notag \\
        &= (1-\alpha) q_j^{\mathrm{exp}}(t) + \frac{\alpha}{n},
    \end{align}
    where the last equality follows from $\sum_{k\in\mathcal{N}}q_k^{\mathrm{exp}}(t)=1$.

\begin{remark}
    The Hedge Algorithm \cite{littlestone1994information} is the special case of Fixed Share with~$\alpha=0$, for which $q_j(t+1)=q_j^{\mathrm{exp}}(t)$, exactly the~\eqref{eq: intermediate weight}.
    Hedge competes with the best fixed expert in hindsight. 
    For $\alpha >0$, the mixing step in Fixed Share guarantees $q_j(t+1) \geq \alpha/n$, ensuring a positive weight for every~expert. 
\end{remark}

\subsection{Fixed Share Regret Bounds}
To apply Theorem~\ref{thm: regret_bound_tc} to Fixed Share, we first bound the variation of its aggregation weights.

\begin{lemma}[One-Step Weight Variation] 
\label{lem: weight_variation}
    Let $\mathbf q(t) = (q_j(t))_{j\in\mathcal N}$ be generated by \eqref{eq: intermediate weight} and \eqref{eq: fixed share standard update} with learning rate $\eta>0$ and parameter $\alpha\in[0,1]$.
    Then, for every $t\ge2$, we have
    \[
    \|\mathbf q(t)-\mathbf q(t-1)\|_1
    \leq
    \eta 
    \left(
        \max_{j\in\mathcal{N}} \tilde{\ell}_j(t-1) 
        -
        \min_{j\in\mathcal{N}} \tilde{\ell}_j(t-1)
    \right)
    +2\alpha\left(1-\frac{1}{n}\right).
    \]
\end{lemma}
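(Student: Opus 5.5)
The plan is to split the one-step change of Fixed Share into an exponential-weighting part and a sharing part, and bound each separately. Fix $t\ge2$ and write $\mathbf p:=\mathbf q(t-1)$, $\mathbf p^{\exp}:=\mathbf q^{\exp}(t-1)$, and $\mathbf u:=\frac1n\mathbf 1$. By \eqref{eq: fixed share standard update}, $\mathbf q(t)=(1-\alpha)\mathbf p^{\exp}+\alpha\mathbf u$. Since $\mathbf p=(1-\alpha)\mathbf p+\alpha\mathbf p$, we get
\[
\mathbf q(t)-\mathbf q(t-1)=(1-\alpha)\bigl(\mathbf p^{\exp}-\mathbf p\bigr)+\alpha\bigl(\mathbf u-\mathbf p\bigr).
\]
The triangle inequality then gives $\|\mathbf q(t)-\mathbf q(t-1)\|_1\le(1-\alpha)\|\mathbf p^{\exp}-\mathbf p\|_1+\alpha\|\mathbf u-\mathbf p\|_1$. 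This also covers $t=2$, because $\mathbf q(1)$ is the uniform initialization and the update formula is the same.

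\emph{Sharing term.} For any $\mathbf p\in\Delta_n$, the map $\mathbf p\mapsto\|\mathbf u-\mathbf p\|_1$ is convex. Its maximum over the simplex is therefore attained at a vertex $\mathbf e_j$, where it equals $(1-\frac1n)+(n-1)\frac1n=2(1-\frac1n)$. Hence $\alpha\|\mathbf u-\mathbf p\|_1\le2\alpha(1-\frac1n)$.

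\emph{Exponential-weighting term.} This is the main step. Set $D:=\max_j\tilde\ell_j(t-1)-\min_j\tilde\ell_j(t-1)$ and $\Delta_j:=\tilde\ell_j(t-1)-\min_k\tilde\ell_k(t-1)\in[0,D]$. The common shift cancels in \eqref{eq: intermediate weight}, so $p^{\exp}_j=p_je^{-\eta\Delta_j}/Z$ with $Z=\sum_kp_ke^{-\eta\Delta_k}$.

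A crude entrywise bound ($p^{\exp}_j\ge p_je^{-\eta D}$) only yields $2\eta D$, which loses a factor. I would instead use Pinsker's inequality together with a Hoeffding-type bound on the relative entropy. Let $\psi(s):=\log\sum_kp_ke^{-s\Delta_k}$ and let $\mathbf p^{(s)}$ be the tilted distribution. Then $\mathrm{KL}(\mathbf p^{(\eta)}\|\mathbf p)=\psi(\eta)-\eta\psi'(\eta)=\int_0^\eta s\,\psi''(s)\,ds$. Here $\psi''(s)$ is the variance of $\Delta$ under $\mathbf p^{(s)}$, which is at most $D^2/4$ because $\Delta\in[0,D]$. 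This gives $\mathrm{KL}\le\eta^2D^2/8$. Pinsker's inequality then yields $\|\mathbf p^{\exp}-\mathbf p\|_1\le\sqrt{2\cdot\eta^2D^2/8}=\eta D/2\le\eta D$.

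\emph{Conclusion.} Combining the two bounds gives $\|\mathbf q(t)-\mathbf q(t-1)\|_1\le(1-\alpha)\eta D/2+2\alpha(1-\frac1n)$. This is at most the claimed $\eta D+2\alpha(1-\frac1n)$, so the lemma holds with some slack in the first term.
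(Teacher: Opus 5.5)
Your proof is correct, and it differs from the paper's in both steps.

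\emph{Decomposition.} The paper triangulates through $\mathbf q^{\mathrm{exp}}(t-1)$, bounding $\|\mathbf q^{\mathrm{exp}}(t-1)-\mathbf q(t-1)\|_1+\alpha\|\frac1n\mathbf 1-\mathbf q^{\mathrm{exp}}(t-1)\|_1$. Your convex-combination identity $\mathbf q(t)-\mathbf q(t-1)=(1-\alpha)(\mathbf p^{\mathrm{exp}}-\mathbf p)+\alpha(\mathbf u-\mathbf p)$ instead measures the sharing step from $\mathbf q(t-1)$. This gains a factor $1-\alpha$ on the loss term, while the sharing bound $2\alpha(1-\frac1n)$ is the same either way.

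\emph{Exponential-weighting step.} The paper uses the same tilted family as you but stays at first order. It integrates $\frac{d}{ds}\tilde q_j(s)=\tilde q_j(s)\bigl(\sum_k\tilde q_k(s)\tilde\ell_k(t-1)-\tilde\ell_j(t-1)\bigr)$ over $[0,\eta]$ and bounds each deviation crudely by $D$, getting $\eta D$. You go to second order: you bound the KL divergence by $\int_0^\eta s\,\mathrm{Var}_s(\Delta)\,ds\le\eta^2D^2/8$ and convert with Pinsker, which gives $\eta D/2$.

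\emph{What each buys.} The paper's route is more elementary, using only calculus and the triangle inequality. Yours yields the sharper bound $(1-\alpha)\eta D/2+2\alpha(1-\frac1n)$. That would shrink the $\eta\sum_{t}c(t)(b-a+2c(t-1))$ term in Corollary~\ref{thm: regret_bound_tc_adaptive} by the factor $(1-\alpha)/2$. The factor $\frac12$ is also available on the paper's route, though. Its integrand $\sum_j\tilde q_j(s)\bigl|\sum_k\tilde q_k(s)\tilde\ell_k(t-1)-\tilde\ell_j(t-1)\bigr|$ is a mean absolute deviation of a variable with range $D$, hence at most $\sqrt{\mathrm{Var}_s}\le D/2$.

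One small slip: $\mathrm{KL}(\mathbf p^{(\eta)}\|\mathbf p)=\eta\psi'(\eta)-\psi(\eta)$, not $\psi(\eta)-\eta\psi'(\eta)$. The latter is $\le0$ by convexity of $\psi$ and $\psi(0)=0$. Your integral $\int_0^\eta s\,\psi''(s)\,ds$ is the correct value, so nothing downstream is affected.
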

\begin{proof}
    The proof is given in Appendix~\ref{appendix: technical proofs}.
\end{proof}

Combining Theorem~\ref{thm: regret_bound_tc} and Lemma~\ref{lem: weight_variation} with the standard Fixed Share regret bound for the losses
$\tilde\ell_j(t)$ yields the following result.

\begin{corollary}[Cost-Sensitive Tracking Regret Bound for Fixed Share]
\label{thm: regret_bound_tc_adaptive} 
Assume Theorem \ref{thm: regret_bound_tc} holds.
Consider the Fixed Share algorithm with mixing parameter $\alpha \in [0, 1)$, the cost-sensitive tracking regret $R_{\rm{track}}^c(T, K) $ satisfies
\begin{align*}
    R_{\rm{track}}^c(T, K) 
    &\le
    \frac{\mathcal{C}(K, \alpha, n, T)}{\eta} + \frac{\eta}{8}\sum_{t=1}^T (b-a+2c(t))^2\\
    &
    \qquad 
    + 
    \eta\sum_{t=2}^T c(t)(b-a+2c(t-1))
    +
    2\alpha\left(1-\frac1n\right)\sum_{t=1}^T c(t), 
\end{align*}
where the term $\mathcal{C}(K, \alpha, n, T)$ is given by 
\begin{align*}
    \mathcal{C}(K, \alpha, n, T) :=
    \begin{cases}
        \log n & \text{if } \alpha=0 \text{ and } K=0 \\
        \infty & \text{if } \alpha=0 \text{ and } K>0 \\
        (K+1) \log n + K \log\frac{1}{\alpha} + (T-K-1)\log\frac{1}{1-\alpha} & \text{otherwise} .
    \end{cases}
\end{align*}
\end{corollary}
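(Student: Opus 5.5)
The plan is to apply Theorem~\ref{thm: regret_bound_tc} with $\mathbf q(t)$ generated by Fixed Share, and to bound its two terms separately. The first term is the expert-loss regret of Fixed Share on the losses $\tilde\ell_j(t)$. The second term is the additional turnover cost, which will be handled with Lemma~\ref{lem: weight_variation}.

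\emph{Step 1: the expert-loss term.} Fix a minimizing comparator $\mathbf j\in\mathcal J_{T,K}$. I will bound $\sum_t[\sum_j q_j(t)\tilde\ell_j(t)-\tilde\ell_{j_t}(t)]$ by the standard potential argument. Let $W_t$ be the unnormalized Fixed Share weights, viewed as a mixture over expert sequences. Hoeffding's lemma applied to $\tilde\ell_j(t)\in[a,b+2c(t)]$, a range of width $b-a+2c(t)$, gives
\[
-\tfrac1\eta\log\sum_j q_j(t)e^{-\eta\tilde\ell_j(t)}\ \ge\ \sum_j q_j(t)\tilde\ell_j(t)-\tfrac{\eta}{8}(b-a+2c(t))^2 .
\]
Telescoping the log-normalizers yields
\[
\sum_t\sum_j q_j(t)\tilde\ell_j(t)\ \le\ -\tfrac1\eta\log\Bigl(\text{prior}(\mathbf j)\,e^{-\eta\sum_t\tilde\ell_{j_t}(t)}\Bigr)+\tfrac\eta8\sum_t(b-a+2c(t))^2 .
\]
This uses the known equivalence of Fixed Share with a Markov prior on sequences, with initial mass $1/n$, stay probability $1-\alpha+\alpha/n\ge1-\alpha$, and switch probability $\alpha/n$. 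Hence
\[
\text{prior}(\mathbf j)\ \ge\ \frac1n\Bigl(\frac\alpha n\Bigr)^{S_T(\mathbf j)}(1-\alpha)^{T-1-S_T(\mathbf j)} .
\]
Since $S_T(\mathbf j)\le K$, $-\log\text{prior}(\mathbf j)\le\mathcal C(K,\alpha,n,T)$ for $\alpha\in(0,1)$. The bound is monotone in the number of switches because $\log(n/\alpha)\ge\log\frac{1}{1-\alpha}$; one may also argue directly with $S=K$. For $\alpha=0$ the algorithm is Hedge: if $K=0$ the prior is $1/n$, giving $\log n$, and if $K>0$ the bound is vacuous, i.e.\ $\infty$.

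\emph{Step 2: the turnover term.} For $t=1$, $\mathbf q(0)=\mathbf q(1)$, so this term vanishes. For $t\ge2$, Lemma~\ref{lem: weight_variation} gives
\[
c(t)\|\mathbf q(t)-\mathbf q(t-1)\|_1\ \le\ \eta\,c(t)\Bigl(\max_j\tilde\ell_j(t-1)-\min_j\tilde\ell_j(t-1)\Bigr)+2\alpha\Bigl(1-\tfrac1n\Bigr)c(t).
\]
The spread of the losses at time $t-1$ is at most $b-a+2c(t-1)$. Summing over $t$ produces the third term of the corollary and (after extending the sum to $t=1$, which only increases a nonnegative quantity) the fourth term.

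Adding the two bounds completes the proof. The main obstacle is Step 1, specifically justifying the sequence-prior representation of Fixed Share with the per-round varying loss ranges. I would handle it by citing the standard Herbster--Warmuth / Cesa-Bianchi--Lugosi analysis, since Hoeffding's lemma applies round by round with range $b-a+2c(t)$. The remaining bookkeeping is to check that the mixing step does not break the telescoping, which holds because sharing preserves total weight.
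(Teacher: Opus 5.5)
Your proposal follows the paper's proof. Like the paper, you apply Theorem~\ref{thm: regret_bound_tc} and bound the expert-loss term by the standard Fixed Share bound for losses in $[a,b+2c(t)]$. Your Step~2 is identical to the paper's treatment of the turnover term: $\mathbf q(0)=\mathbf q(1)$, Lemma~\ref{lem: weight_variation}, spread at most $b-a+2c(t-1)$, then extending $\sum_t c(t)$ to $t=1$. The only difference is in Step~1. The paper cites \cite{cesa2006prediction}, whereas you re-derive the bound with a round-by-round Hoeffding estimate and the Markov-prior telescoping. That is the argument behind the citation, and it is what actually justifies the time-varying ranges $b-a+2c(t)$, so it is a useful addition.

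There is, however, one incorrect step in Step~1. The inequality $\log(n/\alpha)\ge\log\frac{1}{1-\alpha}$ is equivalent to $\alpha\le n/(n+1)$. For $\alpha\in(n/(n+1),1)$ the quantity $-\log\bigl[\frac1n(\frac{\alpha}{n})^{S}(1-\alpha)^{T-1-S}\bigr]$ is therefore decreasing in $S$. Your bound then does not yield $\mathcal C(K,\alpha,n,T)$ for a comparator with $S_T(\mathbf j)<K$. For example, with $n=2$, $\alpha=0.9$, $K=1$, $S=0$ it exceeds $\mathcal C$ by $\log 4.5\approx1.5$.

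Nor can you ``argue directly with $S=K$'', because the minimizer over $\mathcal J_{T,K}$ may switch fewer than $K$ times. The repair is to reverse the order of your two estimates. The exact prior is $\frac1n(\frac{\alpha}{n})^{S}(1-\alpha+\frac{\alpha}{n})^{T-1-S}$, which is nonincreasing in $S$ for every $\alpha\in(0,1]$ since $\frac{\alpha}{n}\le 1-\alpha+\frac{\alpha}{n}$. So first pass from $S$ to $K$, and only afterwards use $1-\alpha+\frac{\alpha}{n}\ge 1-\alpha$. With that change your argument is complete.
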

\begin{proof}
    The proof is given in Appendix~\ref{appendix: technical proofs}.
\end{proof}

\subsection{Parameter Selection and Asymptotic No-Regret}

We can further optimize the cost-sensitive tracking regret bounds obtained in Corollary~\ref{thm: regret_bound_tc_adaptive} when the transaction costs $c(t)$ are known for the entire horizon $T$. 

\begin{proposition}[Optimal Regret Bound]
\label{prop: optimized_bounds}
    Assume the conditions of Corollary~\ref{thm: regret_bound_tc_adaptive} hold, with $T\ge2$ and $K\in\{0,\ldots,T-2\}$. 
    Define 
$A_T :=
\frac{1}{8}\sum_{t=1}^T (b-a+2c(t))^2
+\sum_{t=2}^T c(t)(b-a+2c(t-1))$
, and $C_T := \sum_{t=1}^T c(t)$. 
    Let $H(x)=-x\log{x} -(1-x)\log(1-x)$ be the binary entropy function defined for $x \in [0, 1]$ with the convention $0\log 0:=0$.
    The cost-sensitive tracking regret $R_{\rm{track}}^c(T, K)$ is optimally bounded as follows:
    
    \textit{Case 1: For $C_T=0$.} 
    With the optimal mixing parameter $\alpha^*=\frac{K}{T-1} \in [0, 1)$ and the optimal learning rate $\eta$ is chosen as:
    \begin{align*} 
        \eta^* = \frac{1}{b-a}\sqrt{\frac{8}{T}\left((K+1)\log n + (T-1)H\left(\frac{K}{T-1}\right)\right)},
    \end{align*} 
    the resulting bound is
   $$ 
    R_{\rm{track}}^c(T, K) \leq (b-a)\sqrt{\frac{T}{2}\left((K+1)\log n + (T-1)H\left(\frac{K}{T-1}\right)\right)}.
    $$

\textit{Case 2: For $C_T > 0$ and $K=0$.} 
    With $\alpha^*=0$ and $\eta^* = \sqrt{\frac{\log n}{A_T}}$, the bound is
    $$
    R_{\rm{track}}^c(T, 0) \leq 2 \sqrt{A_T \log n}.
    $$

\textit{Case 3: For $C_T > 0$ and $K \geq 1$.}
    Let $\alpha^* \in \left(0, \frac{K}{T-1}\right)$ be the unique solution to the equation
    $$
    \frac{K - (T-1)\alpha}{\alpha(1-\alpha)} = 2\left(1-\frac{1}{n}\right) C_T \sqrt{\frac{\mathcal{C}(K, \alpha, n, T)}{A_T}}.
    $$
    For $\eta^* = \sqrt{\frac{\mathcal{C}(K, \alpha^*, n, T)}{A_T}}$, the bound is
    $$
    R_{\rm{track}}^c(T, K) \leq 2 \sqrt{ A_T \cdot \mathcal{C}(K, \alpha^*, n, T) } + 2\left(1-\frac{1}{n}\right) \alpha^* C_T.
    $$
\end{proposition}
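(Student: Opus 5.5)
The plan is to start from Corollary~\ref{thm: regret_bound_tc_adaptive} and rewrite its right-hand side as
\[
F(\eta,\alpha)=\frac{\mathcal C(K,\alpha,n,T)}{\eta}+\eta A_T+2\left(1-\frac1n\right)\alpha C_T ,
\]
which holds by the definitions of $A_T$ and $C_T$. We then minimize $F$ over $\eta>0$ and $\alpha\in[0,1)$, case by case.

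For fixed $\alpha$ with $\mathcal C<\infty$, the map $\eta\mapsto \mathcal C/\eta+\eta A_T$ is convex. Its minimizer is $\eta=\sqrt{\mathcal C/A_T}$, with value $2\sqrt{A_T\,\mathcal C}$ (AM--GM). This is exactly the form claimed in Cases~2 and~3.

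\emph{Case~2} ($C_T>0$, $K=0$). Any $\alpha$ gives $\mathcal C(0,\alpha,n,T)=\log n+(T-1)\log\frac1{1-\alpha}\ge\log n$. The penalty term $2(1-\frac1n)\alpha C_T$ is nonnegative. Hence $\alpha^*=0$ is optimal, and $F=2\sqrt{A_T\log n}$.

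\emph{Case~1} ($C_T=0$). All $c(t)=0$, so $A_T=\frac{T}{8}(b-a)^2$ and the $\alpha$-penalty vanishes. We therefore minimize $\mathcal C(K,\alpha,n,T)$ over $\alpha$. Setting the derivative to zero gives
\[
-\frac{K}{\alpha}+\frac{T-K-1}{1-\alpha}=0,
\]
so $\alpha^*=K/(T-1)$. Substituting yields
\[
\mathcal C(K,\alpha^*,n,T)=(K+1)\log n+(T-1)H\!\left(\tfrac{K}{T-1}\right),
\]
using the identity $K\log\frac{T-1}{K}+(T-K-1)\log\frac{T-1}{T-K-1}=(T-1)H(\frac{K}{T-1})$. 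This also covers $K=0$ through the convention $0\log0=0$. Plugging in $A_T$ gives the stated $\eta^*$ and the bound $2\sqrt{A_T\mathcal C}=(b-a)\sqrt{\frac T2\,\mathcal C}$.

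\emph{Case~3} ($C_T>0$, $K\ge1$). Here I would minimize the reduced function
\[
G(\alpha)=2\sqrt{A_T\,\mathcal C(\alpha)}+2\left(1-\frac1n\right)\alpha C_T
\]
over $\alpha\in(0,1)$, since $\mathcal C\to\infty$ as $\alpha\to0$ or $\alpha\to1$. We have $\mathcal C'(\alpha)=-\frac{K}{\alpha}+\frac{T-K-1}{1-\alpha}=-\frac{K-(T-1)\alpha}{\alpha(1-\alpha)}$. So the condition $G'(\alpha)=0$ reads
\[
\sqrt{A_T}\,\frac{K-(T-1)\alpha}{\alpha(1-\alpha)}\frac{1}{\sqrt{\mathcal C}}=2\left(1-\frac1n\right)C_T .
\]
This is the displayed equation up to how the factor $\sqrt{\mathcal C/A_T}$ is arranged. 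I would rearrange and reconcile it with the paper's normalization, which may reveal that the displayed form uses $\sqrt{A_T/\mathcal C}$ rather than $\sqrt{\mathcal C/A_T}$.

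Existence of a root in $(0,K/(T-1))$ follows from the sign pattern. At $\alpha\to0^+$ the left side tends to $+\infty$ while the right side stays bounded. At $\alpha=K/(T-1)$ the left side is $0$ and the right side is positive. Since $K\le T-2$, we have $K/(T-1)<1$, so this interval sits inside $(0,1)$. For $\alpha\ge K/(T-1)$, both $\mathcal C$ and the penalty are nondecreasing, so no minimizer lies there.

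The main obstacle is uniqueness. I would try to show that the left side $\frac{K-(T-1)\alpha}{\alpha(1-\alpha)}$ is strictly decreasing on $(0,K/(T-1))$, which one checks by direct differentiation. For the right side, $\mathcal C$ is decreasing on this interval, so $\sqrt{\mathcal C}$ is decreasing. Whichever arrangement of the equation is used, one side is then monotone against a positive decreasing or increasing side, and I must check the ratio carefully. If direct monotonicity fails, the fallback is to argue convexity of $G$ via
\[
\mathcal C''=\frac{K}{\alpha^2}+\frac{T-K-1}{(1-\alpha)^2}>0
\]
together with a bound showing $\sqrt{\mathcal C}$ stays convex, namely $2\mathcal C\mathcal C''\ge(\mathcal C')^2$ on the interval. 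This would force a unique stationary point, which is then the global minimizer. Finally, plugging $\eta^*$ and $\alpha^*$ into $F$ gives the stated bound.
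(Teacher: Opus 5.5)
Your overall route matches the paper. You rewrite the bound as $F(\eta,\alpha)$, minimize in $\eta$ to get $2\sqrt{A_T\,\mathcal C}$, and split into cases. Your entropy computation in Case~1 and the comparison $\mathcal C(0,\alpha,n,T)\ge\log n$ in Case~2 are correct and slightly more self-contained than the paper, which cites the standard Fixed Share analysis for Case~1 and differentiates in $\alpha$ for Case~2.

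\textbf{The gap: uniqueness of $\alpha^*$ in Case~3.} Uniqueness is part of the statement, and you leave it open.

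Your first idea cannot work. Write $\alpha_0:=K/(T-1)$ and $L(\alpha):=\frac{K-(T-1)\alpha}{\alpha(1-\alpha)}$. Then $L=-\mathcal C'$, so $L'=-\mathcal C''<0$. But $\sqrt{\mathcal C}$ is also decreasing on $(0,\alpha_0)$, and two decreasing functions can cross repeatedly.

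The right quantity is the ratio $L/\sqrt{\mathcal C}=-2(\sqrt{\mathcal C})'$. It is strictly decreasing exactly when $\sqrt{\mathcal C}$ is strictly convex. So both of your routes reduce to the inequality $2\mathcal C\mathcal C''>(\mathcal C')^2$ on $(0,\alpha_0)$, and that inequality is the step you do not prove. The paper proves it as follows:
\begin{itemize}
\item From $(T-1)\alpha<K$ and $K\le T-2$ we get $0<\frac{T-1-K}{1-\alpha}<\frac{K}{\alpha}$, hence $(\mathcal C')^2<K^2/\alpha^2$.
\item Directly, $\mathcal C''>K/\alpha^2$ and $\mathcal C>(K+1)\log n$.
\item Since $n\ge2$ gives $\log n>\frac12$, we get $2\mathcal C\mathcal C''>K(K+1)/\alpha^2>K^2/\alpha^2>(\mathcal C')^2$.
\end{itemize}
It follows that $G'$ is strictly increasing on $(0,\alpha_0)$. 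It runs from $-\infty$ at $0^+$ to $2(1-\frac1n)C_T>0$ at $\alpha_0$. By your own observation it is positive on $[\alpha_0,1)$. Together these give a unique root, and that root is the global minimizer.

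\textbf{Two smaller corrections.}
\begin{itemize}
\item The displayed stationarity equation has no typo. Your condition $\sqrt{A_T/\mathcal C}\,L(\alpha)=2(1-\frac1n)C_T$ becomes $L(\alpha)=2(1-\frac1n)C_T\sqrt{\mathcal C/A_T}$ after dividing by $\sqrt{A_T/\mathcal C}$. Do that one-line rearrangement rather than doubting the statement.
\item In the existence step, the boundedness claim depends on the form used. In the paper's form, the right side is proportional to $\sqrt{\mathcal C}$, which is unbounded as $\alpha\to0^+$ because $K\ge1$ forces $\mathcal C\to\infty$. In your form the right side is constant, but you must check that $L/\sqrt{\mathcal C}$ tends to $+\infty$. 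It does, since it behaves like $\sqrt K/(\alpha\sqrt{\log(1/\alpha)})$.
\end{itemize}
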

\begin{proof}
    The proof is given in Appendix~\ref{appendix: technical proofs}.
\end{proof}

\begin{remark}[Cost-Sensitive Static Regret Bound]
    Setting $K=0$ and $\alpha=0$ in Corollary \ref{thm: regret_bound_tc_adaptive} and Proposition~\ref{prop: optimized_bounds} yields the cost-sensitive static regret for the Hedge algorithm and its optimal learning rate. 
\end{remark}

\begin{corollary}[Asymptotic Cost-Sensitive No-Regret]
\label{cor: asymptotic_no_regret}
    Assume the conditions of
    Corollary~\ref{thm: regret_bound_tc_adaptive},
    with $n,a,b$ fixed and $c(t) \in [0,1]$ for all $t$.
    Let $K=K(T)\in\{0,\ldots,T-1\}$ satisfy $K(T)=o(T)$.
    For each sufficiently large $T$, choose the parameters
    as in Proposition~\ref{prop: optimized_bounds}.
    Then the cost-sensitive tracking regret satisfies
    \[
        \limsup_{T\to\infty}
        \frac{R_{\rm track}^c(T,K(T))}{T}\le0.
    \]
\end{corollary}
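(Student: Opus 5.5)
We plan to apply Proposition~\ref{prop: optimized_bounds} case by case, divide each resulting bound by $T$, and show that it vanishes as $T\to\infty$. The one technical wrinkle is that Case~3 uses an implicitly defined $\alpha^*$. We avoid dealing with it directly: since $\alpha^*$ (and $\eta^*$) are the minimizers of the bound in Corollary~\ref{thm: regret_bound_tc_adaptive}, the optimized bound is at most the Corollary's bound evaluated at any explicit admissible choice. We will use $\alpha=K/(T-1)$ (or $\alpha=0$ when $K=0$) together with $\eta=\sqrt{\mathcal C/A_T}$. If we prefer not to rely on the optimality of the Proposition's choice, we can simply state the corollary for this explicit parameter choice, and the argument below goes through unchanged.

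\textbf{Step 1 (linear growth of $A_T$ and $C_T$).} Because $c(t)\in[0,1]$, each summand of $A_T$ is bounded by a constant. Hence
\[
A_T\le T\Big[\tfrac18(b-a+2)^2+(b-a+2)\Big]=:\kappa T,
\]
and also $C_T\le T$. We also have $A_T\ge \tfrac18 T(b-a)^2>0$, so $\eta$ is well defined. Note that $\kappa$ depends only on $a$ and $b$.

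\textbf{Step 2 (sublinear complexity term).} With $\alpha=K/(T-1)$, a direct expansion gives
\[
K\log\tfrac1\alpha+(T-1-K)\log\tfrac1{1-\alpha}=(T-1)H\!\left(\tfrac{K}{T-1}\right).
\]
Therefore
\[
\mathcal C=(K+1)\log n+(T-1)H\!\left(\tfrac{K}{T-1}\right).
\]
Since $K(T)=o(T)$, we have $x_T:=K/(T-1)\to0$, and since $H$ is continuous at $0$ with $H(0)=0$, we get $H(x_T)\to0$. Consequently $\mathcal C/T\to0$.

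\textbf{Step 3 (assembling the bound).} Substituting these choices into the Corollary, the first three terms are bounded by $2\sqrt{A_T\,\mathcal C}\le 2\sqrt{\kappa T\,\mathcal C}$. Dividing by $T$ gives $2\sqrt{\kappa\,\mathcal C/T}\to0$. The remaining term is
\[
2\alpha\Big(1-\tfrac1n\Big)C_T\le 2\,\tfrac{K}{T-1}\,T,
\]
which is also $o(T)$. Since the regret is at most this bound and the bound is $o(T)$, we conclude that $\limsup R^c_{\rm track}/T\le0$.

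\textbf{Remaining cases.} When $K=0$, or in Case~2 of the Proposition, the bound reduces to $2\sqrt{A_T\log n}=O(\sqrt T)$. When $K=T-1$, which lies outside the Proposition's range, this can occur for only finitely many $T$ because $K=o(T)$, so it does not affect the $\limsup$. In Case~1, where $C_T=0$, the bound is exactly the Step~3 expression with the last term equal to zero.

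The main obstacle is justifying the comparison with the implicit $\alpha^*$ in Case~3. We will handle it through the minimizer argument described at the start, noting that the objective is continuous in $\alpha$ on $(0,1)$. If that argument turns out to be shaky, we will fall back on the explicit choice $\alpha=K/(T-1)$.
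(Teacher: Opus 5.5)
Your proposal is correct and follows essentially the same route as the paper: you bound the Proposition's optimized bound by its value at the explicit choice $\bar\alpha = K/(T-1)$, $\bar\eta = \sqrt{\mathcal{C}/A_T}$, identify $\mathcal{C} = (K+1)\log n + (T-1)H\bigl(K/(T-1)\bigr) = o(T)$, and use $A_T = O(T)$ and $C_T \le T$. The comparison with the implicit $\alpha^*$ is justified by the global minimality of $(\eta^*,\alpha^*)$ established in the Proposition's proof, not by continuity in $\alpha$ (which alone would not suffice). Your fallback of restating the corollary for the explicit parameters is therefore unnecessary, and it would in any case prove a different statement.
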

\begin{proof}
    The proof is given in Appendix~\ref{appendix: technical proofs}.
\end{proof}

\section{Simulation Results}
\label{section: simulation results}
We first examine adaptation to a prescribed regime shift in a synthetic market,
then summarize results on historical stock data.
Additional synthetic experiments, comparisons with online portfolio selection
algorithms, and the full empirical studies appear in
Appendix~\ref{appendix: additional experiments}.

\subsection{Synthetic Data}
\paragraph{The Setup.}
    To illustrate our two-level framework, we first construct a synthetic environment that includes a prescribed regime shift. 
    We simulate daily prices for an artificial market of 32 stocks (\texttt{SYN\_1} to \texttt{SYN\_32}) over 2,000 business days, beginning January 1, 2020. 
    For this portfolio setting, we define our reference expert set as $\mathcal{N}= \{5, 10, 21, 63\}$.
    The transaction cost rate is set to $c=0.001$ (10 bps). 

    \emph{Data Generating Process.}
    Let $\bm{\mu}\in\mathbb{R}^{32}$ be the baseline daily expected return, $\alpha\in(0, 1)$ control the ratio of signal, and $\mathbf{\epsilon}(t)$ is s noise vector where $\mathbf{\epsilon}(t) \sim \mathcal{N}(0, \sigma^2)$.
    Let $\mathbf{y}(t)\in\mathbb{R}^{32}$ denote the return vector of the stocks at time $t$, which is defined as:
    $$
    \mathbf{y}(t) = \bm{\mu} + \alpha \left( \frac{1}{j_t} \sum_{k=1}^{j_t} \mathbf{y}({t-k}) - \bm{\mu} \right) + \mathbf{\epsilon}(t)
    $$
    where $j_t\in\mathcal{N}$ represents the target window size.  
    This construction induces a controlled change in the horizon that governs the conditional mean, allowing us to examine how the proposed cost-sensitive online aggregation framework responds to a known regime shift. 
    In our setup, the market undergoes a sharp regime shift precisely halfway through the dataset: for the first 1,000 days, the returns are driven by a $j_t = 63$, and $j_t = 5$ for the remaining days. 

    We then execute the Hedge and Fixed Share algorithm. 
    Setting the cost-sensitive loss function to negative PnL, Profit and Loss, with rate $c \ge 0$:
    $$
    \ell(\mathbf{f}_j(t), \mathbf{y}(t))= -\mathbf{f}_j(t)^\top \mathbf{y}(t) + c\Vert{}\mathbf{f}_j(t) - \mathbf{f}_j(t-1)\Vert{}_1.
    $$
    Since the number of experts $n$, the time horizon $T$, and the transaction cost rate $c$ are given, all the parameters needed can be optimized.

\paragraph{Performance Evaluation.}
Figure~\ref{fig: H+FS tracking expert syn} shows the cost-sensitive tracking
regret defined in Problem~\ref{problem: cost-sensitive regret minimization}.
Figure~\ref{fig: q_j syn 4} shows the aggregation weights $q_j(t)$,
with the thickness of each colored band representing the weight assigned
to one window-size expert.
In this experiment, Fixed Share reallocates its weights to the newly dominant
expert faster than Hedge.

\begin{figure}[htbp]
    \centering
    \includegraphics[width=.6\linewidth]{ 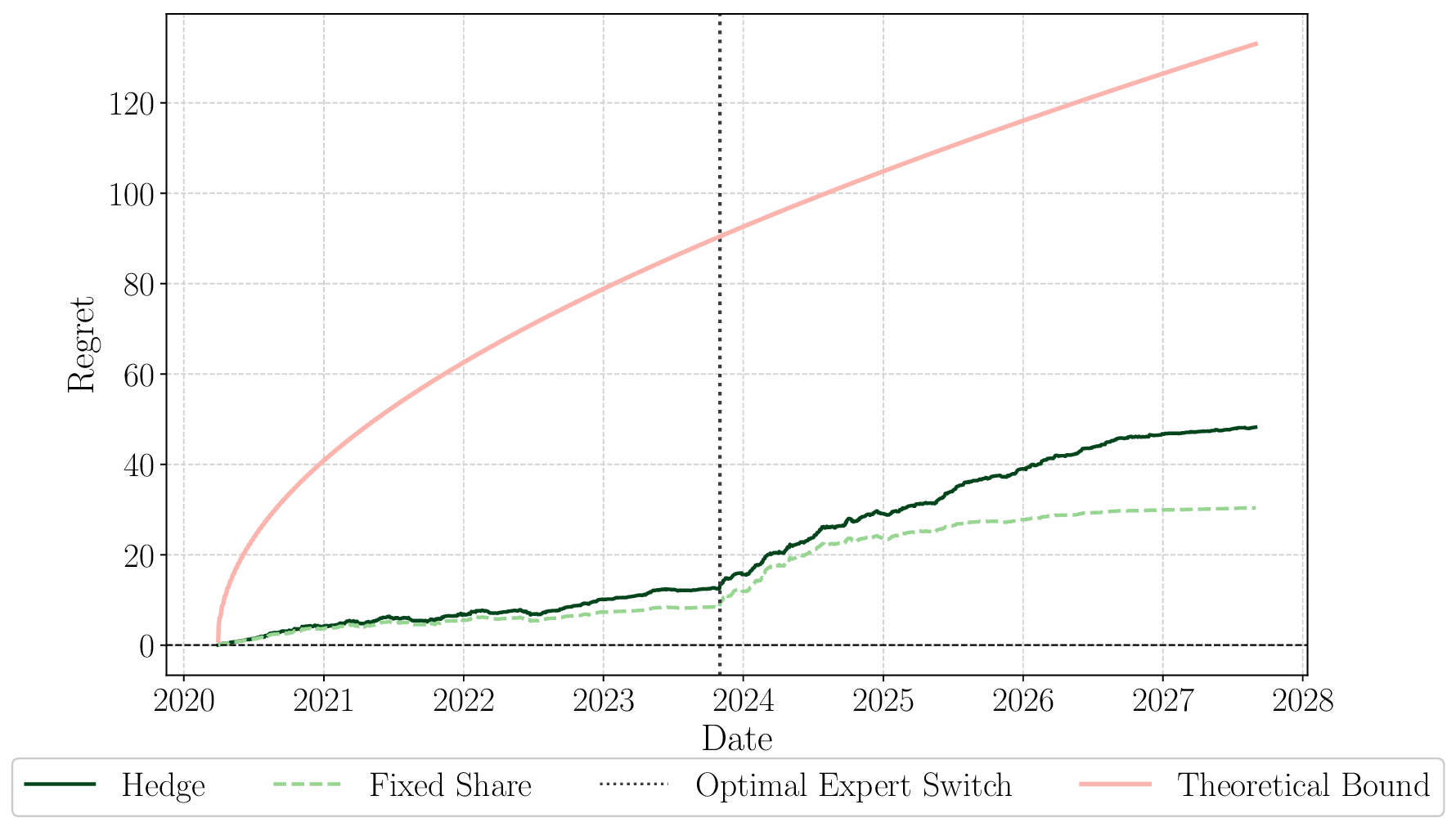}
    \caption{Cost-Sensitive Tracking Regret for Hedge and Fixed Share with Negative PnL Loss}
    \label{fig: H+FS tracking expert syn}
\end{figure}

\begin{figure}[htbp]
    \centering
    \begin{subfigure}[b]{0.45\textwidth}
        \centering
        \includegraphics[width=\textwidth]{ 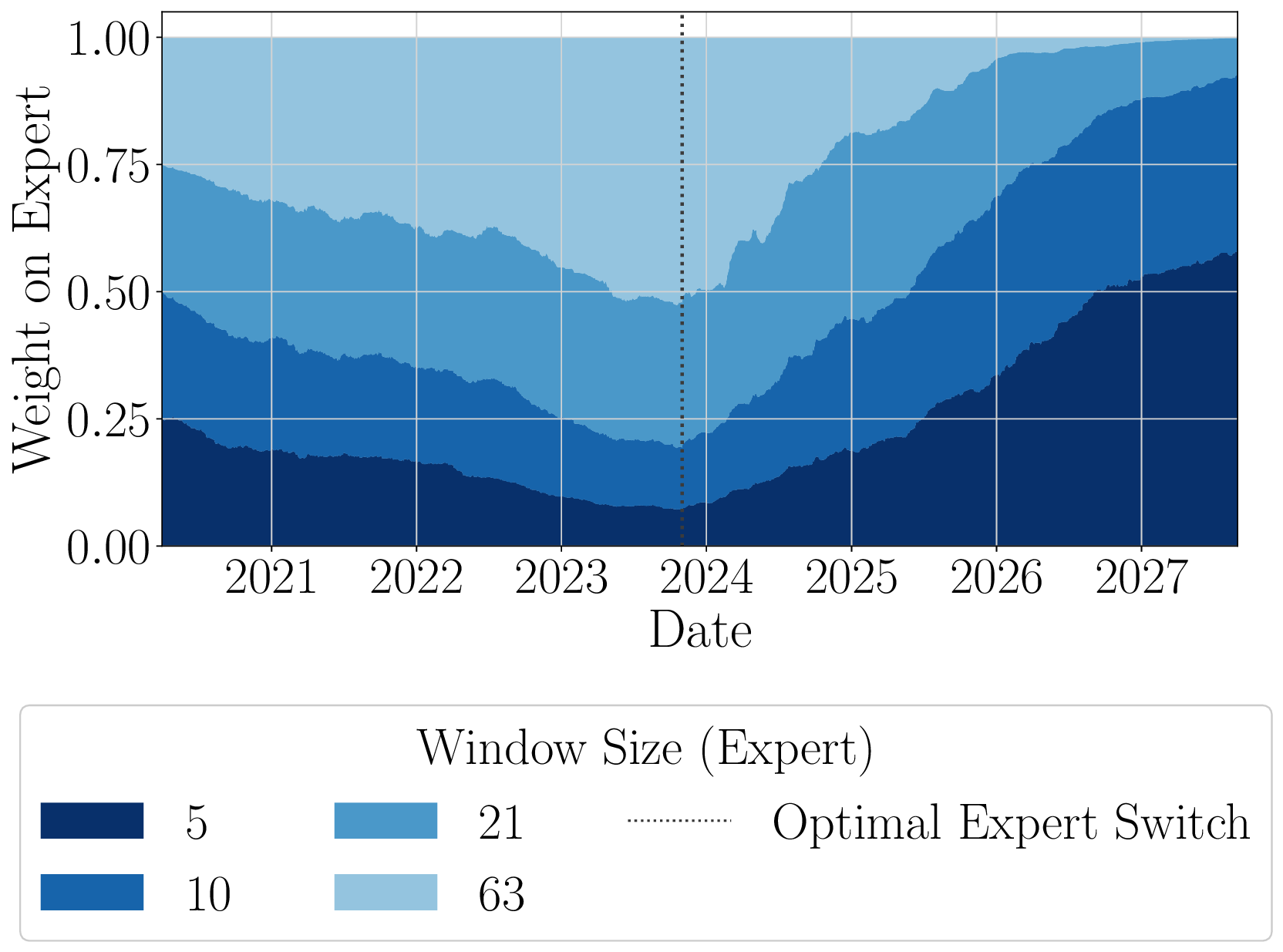}
        \caption{Hedge}
        \label{fig: H q_j syn stacked 4}
    \end{subfigure}
    \hfill
    \begin{subfigure}[b]{0.45\textwidth}
        \centering
        \includegraphics[width=\textwidth]{ 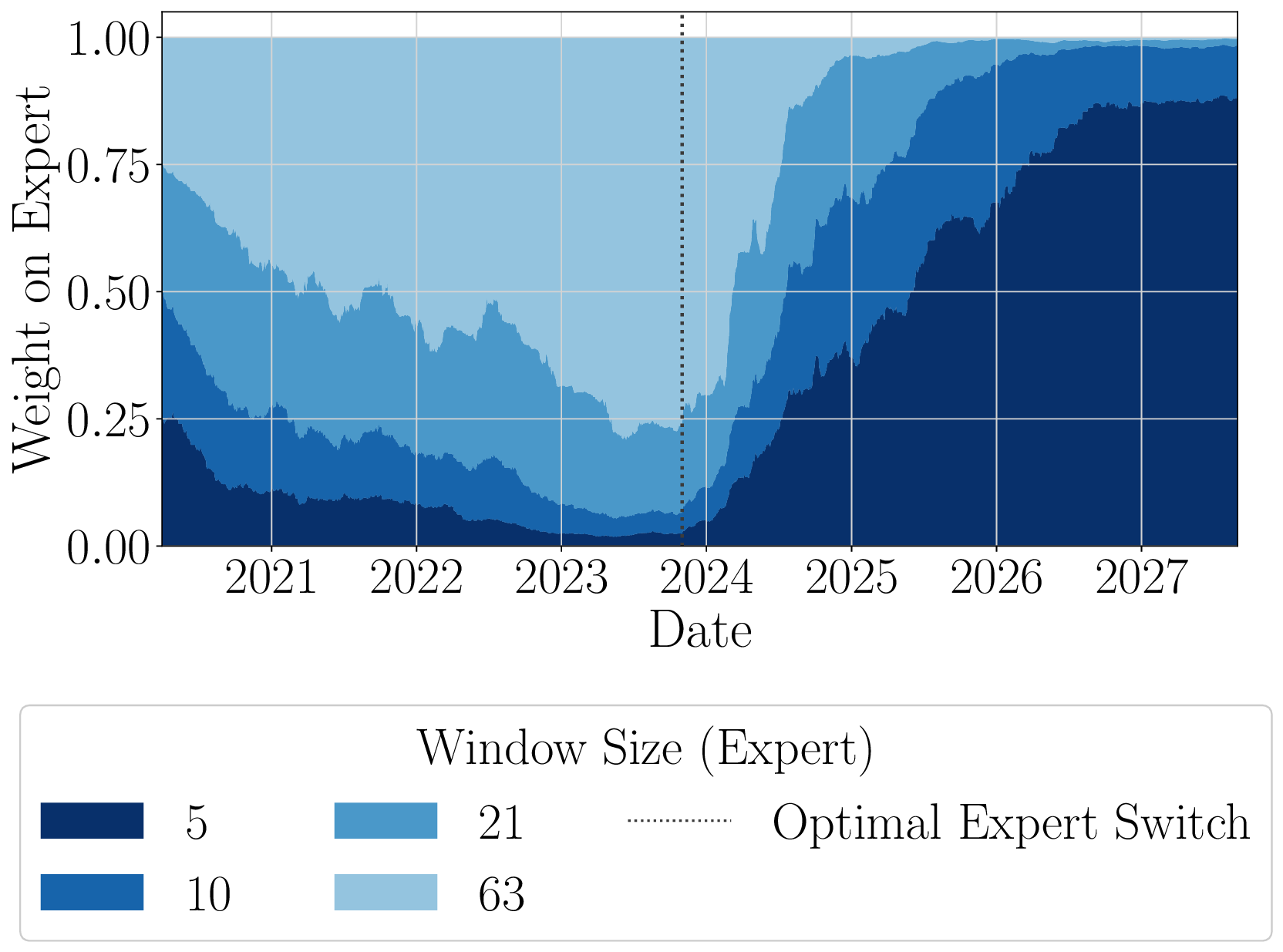}
        \caption{Fixed Share}
        \label{fig: FS q_j syn stacked 4}
    \end{subfigure}
   \caption{Evolution of the aggregation weights $q_j(t)$ assigned to the four window-size experts $j\in\{5,10,21,63\}$ under (a) Hedge and (b) Fixed Share.}    \label{fig: q_j syn 4}
\end{figure}

\subsection{Empirical Summary}
We also evaluate the framework on stock universes drawn from the DJIA and
S\&P 500, using $\mathcal N=\{5,10,21,30,41,50,63,126\}$ and $c=0.001$.
Table~\ref{table: empirical summary} summarizes cumulative returns and drawdowns.
In the DJIA sample, UP and EG achieve higher cumulative returns and smaller drawdown magnitude.
In the S\&P 500 sample, Hedge and Fixed Share achieve higher cumulative returns
than the listed baselines, but have larger drawdown magnitudes than UP and EG.
These results illustrate differing return--drawdown trade-offs across the two samples.
The account-value definition, comparison protocol, and full results are provided in
Appendix~\ref{appendix: additional experiments}. Note that 

\begin{table}[htbp]
    \centering
    \scriptsize
    \caption{Summary of the empirical studies with transaction cost rate $c=0.001$.
    DJIA: 2021/03/04--2024/02/25; S\&P 500: 2020/01/02--2026/08/01.
    Cumulative return and maximum drawdown (MDD) are reported in percent;
    MDD is signed, so values closer to zero indicate smaller drawdowns.}
    \label{table: empirical summary}
    \begin{tabular}{l r r r r}
        \toprule
        & \multicolumn{2}{c}{DJIA} & \multicolumn{2}{c}{S\&P 500} \\
        \cmidrule(lr){2-3}\cmidrule(lr){4-5}
        Algorithm & Return & MDD & Return & MDD \\
        \midrule
        Hedge & 21.1950 & -23.5123 & 567.2010 & -56.9601 \\
        Fixed Share & 17.7014 & -23.0759 & 461.4739 & -58.5937 \\
        UP & 48.7250 & -20.2783 & 205.3887 & -38.3346 \\
        EG & 48.2584 & -20.2783 & 205.7900 & -38.2862 \\
        PAMR & -88.2441 & -88.9822 & -52.0775 & -84.5912 \\
        CWMR & -88.5860 & -89.3036 & 205.5993 & -59.5749 \\
        OLMAR & -38.9035 & -61.0332 & -44.0283 & -81.6301 \\
        \bottomrule
    \end{tabular}
\end{table}

\section{Concluding Remarks}
We use Hedge and Fixed Share to dynamically aggregate window-size experts for portfolio selection while accounting for turnover costs. 
We derive a cost-sensitive tracking-regret bound for Fixed Share, with the static-regret bound for Hedge recovered as a special case.
The bound explicitly characterizes the dependence on the transaction-cost rate and guides the choice of the learning rate.

This paper focuses on tracking regret for expert sequences relative to expert sequences under a prescribed switching budget.
A complementary extension would be to embed the proposed cost-sensitive learner in a strongly adaptive meta-algorithm \cite{daniely2015strongly}, seeking regret guarantees uniformly over all subintervals. Such an extension requires a separate treatment of interval-specific parameterization and transaction costs and is left for future research.

\section*{AI Use Statement}
Generative AI tools (ChatGPT 5.6 sol and Gemini 3.8 Flash) were used to assist with language editing and presentation, and to provide feedback on mathematical derivations and proofs. The authors reviewed and independently verified all AI-assisted content. The authors take full responsibility for the final content of this work.

\bibliography{refs}
\bibliographystyle{iclr2027_conference}

\clearpage
\appendix

\section{Technical Proofs}
\label{appendix: technical proofs}

\begin{proof}[Proof of Lemma~\ref{lem: weight_variation}]
By adding and subtracting the intermediate weight vector $\mathbf{q}^{\mathrm{exp}}(t-1)$ and applying the triangle inequality, the total variation can be decomposed into the variation from the loss update and the variation from the sharing step:
\begin{align}
    \|\mathbf{q}(t) - \mathbf{q}(t-1)\|_1
    &=
    \|\mathbf{q}(t) - \mathbf{q}^{\mathrm{exp}}(t-1) + \mathbf{q}^{\mathrm{exp}}(t-1) - \mathbf{q}(t-1)\|_1  \notag \\
    &\leq
    \underbrace{\|\mathbf{q}^{\mathrm{exp}}(t-1) - \mathbf{q}(t-1)\|_1}_{\text{Loss Update Variation}} + \underbrace{\|\mathbf{q}(t) - \mathbf{q}^{\mathrm{exp}}(t-1)\|_1}_{\text{Sharing Variation}}.
    \label{ineq: total variation of q}
\end{align}
To bound the Loss Update Variation, define an auxiliary continuous function $\tilde{q}_j(s)$ parameterized by $s \in [0, \eta]$ that interpolates the weights during the exponential update:
    \begin{align} \label{eq: continuous auxiliary function}
    \tilde{q}_j(s) 
    := 
    \frac{q_j(t-1) e^{-s \tilde{\ell}_j(t-1)}}{Z(s)},
    \end{align}
where $Z(s) = \sum_{k\in\mathcal{N}} q_k(t-1) e^{-s \tilde{\ell}_k(t-1)}$. 
Note that $\tilde{q}_j(0) = q_j(t-1)$, $\tilde{q}_j(\eta) = q_j^{\mathrm{exp}}(t-1)$ and 
$\sum_{j\in\mathcal{N}}\tilde{q}_j(s)=1$.

For each expert $j$, by the Fundamental Theorem of Calculus, the absolute change is bounded by the integral of the absolute derivative:
    \begin{align}
        \vert q_j^{\mathrm{exp}}(t-1) - q_j(t-1) \vert 
        = \left\vert 
        \int_0^\eta 
        \frac{d}{ds} \tilde{q}_j(s) \, ds \right\vert 
        &\le 
        \int_0^\eta 
        \left\vert 
        \frac{d}{ds} \tilde{q}_j(s) 
        \right\vert \, ds. \label{eq: ftc}
    \end{align}
    
Using the logarithmic derivative identity, $\frac{d}{ds} \log \tilde{q}_j(s) 
= \frac{1}{\tilde{q}_j(s)}(\frac{d}{ds}\tilde{q}_j(s))$, then:
    \begin{align}
        \frac{1}{\tilde{q}_j(s)}\left(\frac{d}{ds}\tilde{q}_j(s)\right) 
        &= 
        \frac{d}{ds} \log \tilde{q}_j(s) \notag \\
        &= 
        -\tilde{\ell}_j(t-1) - \frac{d}{ds} \log Z(s).
    \label{eq: d tilde w}
    \end{align}
where the last equality follows from differentiating~\eqref {eq: continuous auxiliary function}.
    
Evaluating the derivative of the log-partition function $\log Z(s)$ yields the negative expected loss under the interpolated weights
$\{\tilde q_j(s)\}_{j\in\mathcal N}$; i.e.,
    \begin{align}
        \frac{d}{ds} \log Z(s)  
        &= \frac{1}{Z(s)} \sum_{k\in\mathcal{N}} q_k(t-1) (-\tilde{\ell}_k(t-1)) e^{-s \tilde{\ell}_k(t-1)} \notag \\ 
        &= -\sum_{k\in\mathcal{N}} \tilde{q}_k(s) \tilde{\ell}_k(t-1).     \label{eq: d log Z}
    \end{align}

    Substituting \eqref{eq: d log Z} back to \eqref{eq: d tilde w} and isolating the derivative gives:
    \begin{align}
        \frac{d}{ds} \tilde{q}_j(s) 
        = 
        \tilde{q}_j(s) \left( \sum_{k\in\mathcal{N}} \tilde{q}_k(s) \tilde{\ell}_k(t-1) - \tilde{\ell}_j(t-1) \right). \label{eq: auxiliary derivative}
    \end{align}
Applying \eqref{eq: auxiliary derivative} to \eqref{eq: ftc} yields 
    \begin{align}
        \vert q_j^{\mathrm{exp}}(t-1) - q_j(t-1) \vert 
        &\le \int_0^\eta 
        \left\vert 
        \tilde{q}_j(s)
        \left(
        \sum_{k\in\mathcal{N}} \tilde{q}_k(s) \tilde{\ell}_k(t-1) 
        - \tilde{\ell}_j(t-1) 
        \right)
        \right\vert \, ds\notag\\
        &\le \int_0^\eta 
        \tilde{q}_j(s)
        \left\vert 
        \sum_{k\in\mathcal{N}} \tilde{q}_k(s) \tilde{\ell}_k(t-1) 
        - \tilde{\ell}_j(t-1) 
        \right\vert \, ds.\label{eq: aux_2}
    \end{align}

Summing up over all $n$ experts yields:
\begin{align*}
    \| \mathbf{q}^{\mathrm{exp}}(t-1) - \mathbf{q}(t-1) \|_1 
    &\le 
    \sum_{j\in\mathcal{N}} \int_0^\eta \tilde{q}_j(s)\left(\left|\sum_{k\in\mathcal{N}} \tilde{q}_k(s) \tilde{\ell}_k(t-1) 
    -  \tilde{\ell}_j(t-1) \right| \right) \, ds \\
    &= 
     \int_0^\eta \sum_{j\in\mathcal{N}} \tilde{q}_j(s)
     \left(\left|\sum_{k\in\mathcal{N}} \tilde{q}_k(s) \tilde{\ell}_k(t-1) 
    -  \tilde{\ell}_j(t-1) \right| \right)  \, ds  \\
    &\le
    \int_0^\eta
    \sum_{j\in\mathcal N}\tilde q_j(s)
    \left(
    \max_{k\in\mathcal N}\tilde\ell_k(t-1)
    -\min_{k\in\mathcal N}\tilde\ell_k(t-1)
    \right)\,ds\\
    &=
       \eta 
        \left(
        \max_{k\in\mathcal N}\tilde\ell_k(t-1)
        -\min_{k\in\mathcal N}\tilde\ell_k(t-1)
        \right)
\end{align*}
where the last inequality follows because both $\sum_{k\in\mathcal N}\tilde q_k(s)\tilde\ell_k(t-1)$ and $\tilde\ell_j(t-1)$ lie in the interval
$[\min_{k\in\mathcal N}\tilde\ell_k(t-1),
  \max_{k\in\mathcal N}\tilde\ell_k(t-1)]$,
and the last equality uses $\sum_{j\in\mathcal N}\tilde q_j(s)=1$.

Next, we bound the Sharing Variation term in~\eqref{ineq: total variation of q} by substituting the update step $q_j(t) = (1-\alpha)q_j^{\mathrm{exp}}(t-1) + \frac{\alpha}{n}$:
    \begin{align*}
        \|\mathbf{q}(t) 
        - 
        \mathbf{q}^{\mathrm{exp}}(t-1)
        \|_1 
        &= 
        \left\| 
        (1-\alpha)\mathbf{q}^{\mathrm{exp}}(t-1) + 
        \frac{\alpha}{n}\mathbf{1} - \mathbf{q}^{\mathrm{exp}}(t-1) \right\|_1 \\
        &= 
        \left\| \frac{\alpha}{n}\mathbf{1} - \alpha \mathbf{q}^{\mathrm{exp}}(t-1) \right\|_1\\
        &=
        \alpha\left\| \frac{1}{n}\mathbf{1} -  \mathbf{q}^{\mathrm{exp}}(t-1) \right\|_1\\
        &\le 
        2\alpha\left(1-\frac{1}{n}\right),
    \end{align*}
where the last inequality follows from the convexity of the $\ell_1$-norm and the fact that $\|\frac{1}{n}\mathbf{1}-\mathbf e_j\|_1=2(1-\frac{1}{n})$ for every $j\in\mathcal N$. 
Here $\mathbf e_j\in\Delta_n$ assigns unit mass to expert $j$ and zero mass to every other expert.
The bound is sharp over the probability simplex, with equality at its vertices.
    
Combining the bounds for both components yields the final result:
$$
    \|\mathbf{q}(t) - \mathbf{q}(t-1)\|_1 
    \leq 
    \eta 
    \left(
    \max_{j\in\mathcal{N}} \tilde{\ell}_j(t-1) 
    -
    \min_{j\in\mathcal{N}} \tilde{\ell}_j(t-1)
    \right) 
    + 
    2\alpha\left(1-\frac{1}{n}\right),  
$$ 
which completes the proof.  \qedhere
\end{proof}

\begin{proof}[Proof of Corollary~\ref{thm: regret_bound_tc_adaptive}]
Recall Theorem \ref{thm: regret_bound_tc}, for the cost-sensitive tracking regret we have
    $$
    R_{\rm{track}}^c(T, K)  
    \leq 
    \underbrace{\sum_{t=1}^T \left[ 
        \left( \sum_{j\in\mathcal{N}} q_j(t) \tilde{\ell}_j(t) \right) - \tilde{\ell}_{j_t}(t) \right]}_{\text{Expert-loss regret bound}} 
    + 
    \underbrace{\sum_{t=1}^T c(t)\| \mathbf{q}(t) - \mathbf{q}(t-1)\|_1}_{\text{Additional turnover cost regret bound}}.
    $$
For the Regret Bound for $\mathcal{A}$, evaluated on losses in $[a, b+2c(t)]$, by \cite{cesa2006prediction}, standard analysis for the Fixed Share algorithm bounds the regret by:
    $$
    \text{Expert-loss regret bound }
    \le 
    \frac{\mathcal{C}(K, \alpha, n, T)}{\eta} + \frac{\eta}{8}\sum_{t=1}^T (b-a+2c(t))^2.
    $$
For the Regret Bound for Transaction Cost, using $\mathbf q(0)=\mathbf q(1)$
and Lemma~\ref{lem: weight_variation}, we obtain
\begin{align*}
\text{Additional Turnover cost regret bound}
&= \sum_{t=1}^T c(t)
   \|\mathbf q(t)-\mathbf q(t-1)\|_1\\
&= 0 + \sum_{t=2}^T c(t)
   \|\mathbf q(t)-\mathbf q(t-1)\|_1\\
&\le \sum_{t=2}^T c(t)\left[
   \eta\left(
   \max_{j\in\mathcal N}\tilde\ell_j(t-1)
   -\min_{j\in\mathcal N}\tilde\ell_j(t-1)
   \right)
   +2\alpha\left(1-\frac1n\right)\right]\\
&\le \eta\sum_{t=2}^T c(t)(b-a+2c(t-1))
   +2\alpha\left(1-\frac1n\right)\sum_{t=1}^T c(t).
\end{align*}
The last inequality follows from
$\tilde\ell_j(t-1)\in[a,b+2c(t-1)]$
and $c(t)\ge0$.

Combining both components yields the final regret bound. \qedhere
\end{proof}

\begin{proof}[Proof of Proposition~\ref{prop: optimized_bounds}]
By Corollary~\ref{thm: regret_bound_tc_adaptive}, the regret boundcan be expressed as a joint function of $\eta$ and $\alpha$:
    $$
    R(\eta, \alpha) = \frac{\mathcal{C}(K, \alpha, n, T)}{\eta} + \eta A_T + 2\alpha\left(1-\frac{1}{n}\right) C_T.
    $$
For $C_T = 0$. 
Since there are no transaction costs, the setting reduces to the standard Fixed Share algorithm; see \cite{herbster1998tracking}, \cite{cesa2006prediction}.

For $K=0$, if $\alpha>0$, the derivative with respect to $\alpha$ is 
    $$
    \frac{\partial R}{\partial \alpha} 
    = \frac{1}{\eta} \left( \frac{(T-1)\alpha - K}{\alpha(1-\alpha)} \right) + 2\left(1-\frac{1}{n}\right)C_T,
    $$
which is strictly positive for all $\alpha\in(0, 1)$ with $T>1$ and $\eta>0$.
The regret function $R$ is strictly increasing in $\alpha$. 
Therefore, it cannot have a minimum where the derivative is zero; the minimum must lie on the boundary constraint, i.e., $\alpha = 0$.
With $K=0$ and $\alpha = 0$, the complexity term simplifies to $\mathcal{C}(0, 0, n, T) = \log n$. 
The regret bound reduces to a function of $\eta$ alone:
    $$
    R(\eta, 0) = \frac{\log n}{\eta} + \eta A_T.
    $$
Taking the derivative with respect to $\eta$ and setting it to zero yields $\eta^* = \sqrt{\frac{\log n}{A_T}}$. Substituting $\eta^*$ back into $R(\eta, 0)$ gives the minimized bound $R_{\rm{track}}^c(T, 0) \le 2\sqrt{A_T \log n}$.

For the rest of the cases, $K>0$, $\alpha \in (0, 1)$ and $\eta > 0$, we optimize $R(\eta, \alpha)$ sequentially. For any fixed $\alpha \in (0, 1)$, the first and second partial derivatives with respect to $\eta$ are:
    $$
    \frac{\partial R}{\partial \eta} 
    = 
    -\frac{\mathcal{C}(K, \alpha, n, T)}{\eta^2} 
    + A_T
    , \quad 
    \frac{\partial^2 R}{\partial \eta^2} 
    = 
    \frac{2\mathcal{C}(K, \alpha, n, T)}{\eta^3}.
    $$
    Since $\mathcal{C}(K, \alpha, n, T) > 0$ and $\eta > 0$, the second partial derivative is strictly positive. Thus, $R(\eta, \alpha)$ is strictly convex in $\eta$, and setting $\frac{\partial R}{\partial \eta} = 0$ yields the unique conditional minimum:
    $$
    \eta^*(\alpha) = \sqrt{\frac{\mathcal{C}(K, \alpha, n, T)}{A_T}}.
    $$
Substituting $\eta^*(\alpha)$ back into the objective function yields the profile function $g(\alpha)$ depending solely on $\alpha$:
    \begin{align*}
        g(\alpha) 
        &:= R(\eta^*(\alpha), \alpha) \\
        &= \frac{\mathcal{C}(K, \alpha, n, T)}{\sqrt{\mathcal{C}(K, \alpha, n, T)/A_T}} + A_T\sqrt{\frac{\mathcal{C}(K, \alpha, n, T)}{A_T}} + 2\alpha\left(1-\frac{1}{n}\right) C_T \\
        &= 2\sqrt{A_T \cdot \mathcal{C}(K, \alpha, n, T)} + 2\alpha\left(1-\frac{1}{n}\right) C_T.
    \end{align*}
To find the minimum of $g(\alpha)$, we compute its first derivative:
    $$
    g'(\alpha) 
    = 
    \sqrt{\frac{A_T}{\mathcal{C}(K, \alpha, n, T)}} \left( -\frac{K}{\alpha} +\frac{T-K-1}{1-\alpha} \right)
    + 2\left(1-\frac{1}{n}\right)C_T .
    $$
    Setting $g'(\alpha) = 0$ yields the stationarity condition:
    $$
    \frac{K - (T-1)\alpha}{\alpha(1-\alpha)} 
    = 
    2C_T
    \left(1-\frac{1}{n}\right) 
    \sqrt{\frac{\mathcal{C}(K, \alpha, n, T)}{A_T}}.
    $$
To establish that the root $\alpha^*$ is the unique global minimum, we prove that $g(\alpha)$ is strictly convex on the domain $\alpha \in \left(0, \frac{K}{T-1}\right)$. Since $g(\alpha)$ is the sum of a linear term and $2\sqrt{A_T \cdot \mathcal{C}(K, \alpha, n, T)}$, it suffices to show that $\sqrt{\mathcal{C}}$ is strictly convex. The second derivative of $\sqrt{\mathcal{C}}$ is strictly positive if and only if $2\mathcal{C}\mathcal{C}'' > (\mathcal{C}')^2$.
    
The domain constraint $\alpha < \frac{K}{T-1}$. By subtracting $\alpha K$ at the both sides of $\alpha(T-1) < K$, we algebraically rearranges to $K(1-\alpha) > (T-K-1)\alpha$, which implies 
    \begin{align}
        \label{ineq: aux_ga_1}
        \frac{K}{\alpha} > \frac{T-1-K}{1-\alpha} > 0.
    \end{align}
The derivatives of $\mathcal{C}$ with respect to $\alpha$ can be expressed as $\mathcal{C}' = -\frac{K}{\alpha} + \frac{T-1-K}{1-\alpha}$.
Because \eqref{ineq: aux_ga_1}, we have 
    \begin{align*}
        (\mathcal{C}')^2 
        < (\frac{K}{\alpha})^2 
        = \frac{K^2}{\alpha^2}.
    \end{align*}
Furthermore, since $\frac{T-1-K}{(1-\alpha)^2} > 0$, we know 
    \begin{align}
    \label{ineq: aux_ga_3}
        \mathcal{C}'' 
        =
        \frac{K}{{\alpha}^2}+\frac{T-1-K}{(1-\alpha)^2}
        > 
        \frac{K}{\alpha^2}.
    \end{align} 
By definition, since other term are always positive in $\alpha\in(0, \frac{K}{T-1})$, we have 
    \begin{align}
    \label{ineq: aux_ga_4}
        \mathcal{C}(\alpha) 
        > (K+1)\log n.
    \end{align}
Combining \eqref{ineq: aux_ga_3} and \eqref{ineq: aux_ga_4} yields:
    $$ 
    2\mathcal{C}\mathcal{C}'' 
    >
    \frac{2K(K+1)\log n}{\alpha^2}. 
    $$
For any $n \ge 2$, we have $\log n \ge \log 2 > 0.5$, which guarantees the coefficient $2(K+1)\log n > K+1 > K$, yields
    $$ 
    2\mathcal{C}\mathcal{C}'' 
    > \frac{K^2}{\alpha^2} 
    > (\mathcal{C}')^2. 
    $$
Thus, $\sqrt{\mathcal C}$ and $g$ are strictly convex on
$(0,\alpha_0)$, where $\alpha_0:=K/(T-1)$.
Moreover,
\[
    \lim_{\alpha\downarrow0}g'(\alpha)=-\infty,
    \qquad
    g'(\alpha_0)=2\left(1-\frac1n\right)C_T>0.
\]
By continuity and strict monotonicity of $g'$ on
$(0,\alpha_0)$, there exists a unique root
$\alpha^*\in(0,\alpha_0)$.
For $\alpha\in[\alpha_0,1)$, we have
$\mathcal C'(\alpha)\ge0$, and hence $g'(\alpha)>0$.
Therefore, $\alpha^*$ is the unique global minimizer
of $g$ on $(0,1)$.
Evaluating $g(\alpha^*)$ completes the proof.\qedhere
\end{proof}

\begin{proof}[Proof of Corollary~\ref{cor: asymptotic_no_regret}]
Since $c(t)\in[0,1]$, we have $A_T=\mathcal{O}(T)$ and $C_T\le T$.
For sufficiently large $T$, the choice $\bar{\alpha}=K/(T-1)$ belongs to $[0,1)$.
By the definition of $\mathcal{C}$, using its first case
when $K=0$ and its last case when $K>0$, we obtain
\[
    B_T:=\mathcal{C}(K,\bar{\alpha},n,T)
    =(K+1)\log n
    +(T-1)H\left(\frac{K}{T-1}\right),
\]
where $H(0)=0$.
Since $K=o(T)$ and $H(x)\to0$ as $x\to0$,
we have $B_T=o(T)$.
The optimized upper bound is no larger than the bound obtained at $\bar{\alpha}$ and $\bar{\eta}=\sqrt{B_T/A_T}$. 
Hence,
\[
    \frac{R_{\rm track}^c(T,K)}{T}
    \le
    2\sqrt{\frac{A_T}{T}\frac{B_T}{T}}
    +2\left(1-\frac1n\right)
    \frac{K}{T-1}\frac{C_T}{T}.
\]
Both terms on the right converge to zero.
\end{proof}

\clearpage
\section{Additional Experiments}
\label{appendix: additional experiments}
This appendix provides the evaluation details and additional experiments
supporting Section~\ref{section: simulation results}.

\subsection{Account-Value Evaluation}
\label{appendix: account value}
After making decision $\widehat{\mathbf{w}}(t)\in\mathcal{W}$ at time $t$, the trader's account value evolves according to
\begin{align*}
    V(t+1)
    &=V(t)+V(t)\big(\widehat{\mathbf{w}}(t)^\top\mathbf{y}(t)\big) - c \|\widehat{\mathbf{w}}(t) - \widehat{\mathbf{w}}(t-1)\|_1 V(t)\\
    &=
    V(t)\big(
    1+\widehat{\mathbf{w}}(t)^\top\mathbf{y}(t)
    -c\left\|
    \widehat{\mathbf{w}}(t)-\widehat{\mathbf{w}}(t-1)
    \right\|_1
    \big)
\end{align*}    
where $V(1)>0$ and $\widehat{\mathbf w}(0)=\mathbf w_0$.
For this evaluation, we assume
$
1+\widehat{\mathbf w}(t)^\top\mathbf y(t)
-c\|\widehat{\mathbf w}(t)-\widehat{\mathbf w}(t-1)\|_1
>0
$
at every trading period.
This condition ensures positive account values.

\subsection{Additional Synthetic Experiments}
\label{appendix: synthetic variants}
We vary the synthetic setup in Section~\ref{section: simulation results}.
Figure~\ref{fig: q_j syn 4_2} illustrates the effects of altering the timing
of the regime shift, while Figure~\ref{fig: q_j syn} expands the expert set to
$\mathcal N=\{5,10,21,30,41,50,63,126\}$.

\begin{figure}[htbp]
    \centering
    \begin{subfigure}[b]{0.48\textwidth}
        \centering
        \includegraphics[width=\textwidth]{ 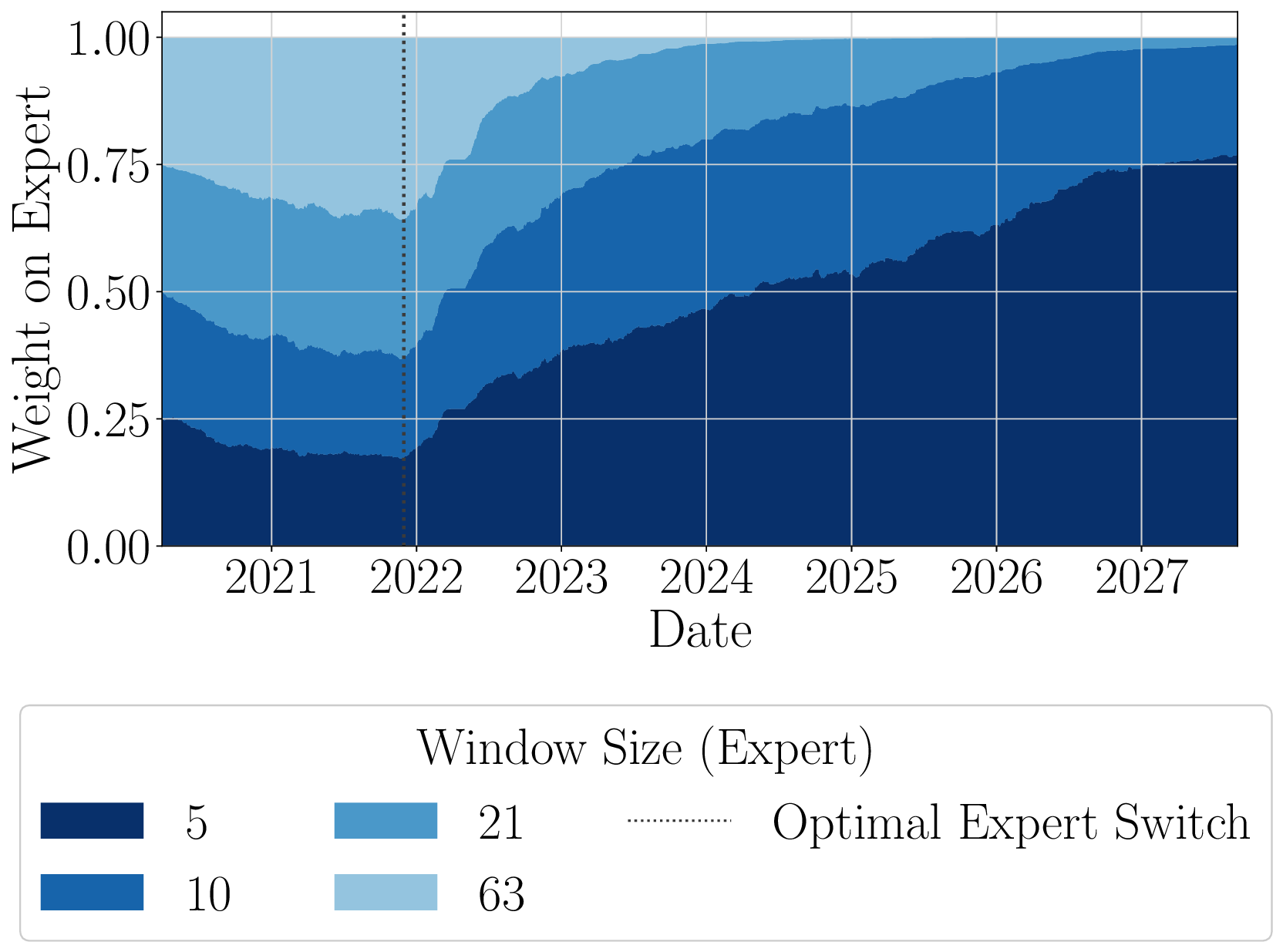}
        \caption{Hedge}
        \label{fig: H q_j syn stacked 4_2}
    \end{subfigure}
    \hfill
    \begin{subfigure}[b]{0.45\textwidth}
        \centering
        \includegraphics[width=\textwidth]{ 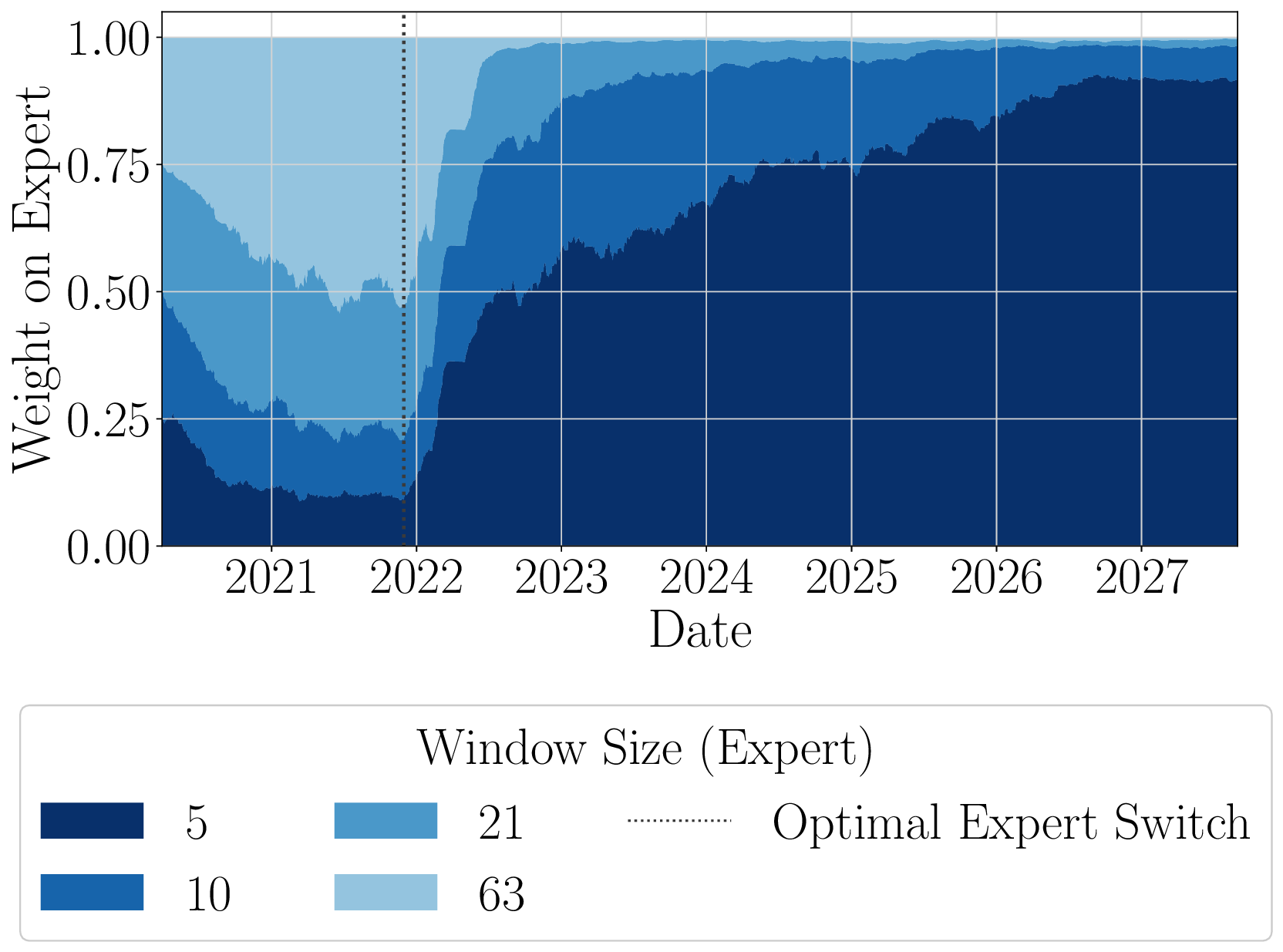}
        \caption{Fixed Share}
        \label{fig: FS q_j syn stacked 4_2}
    \end{subfigure}
    \caption{Evolution of the aggregation weights $q_j(t)$ on four Window Sizes for different switching timing.}
    \label{fig: q_j syn 4_2}
\end{figure}

\begin{figure}[htbp]
    \centering
    \begin{subfigure}[b]{0.45\textwidth}
        \centering
        \includegraphics[width=\textwidth]{ 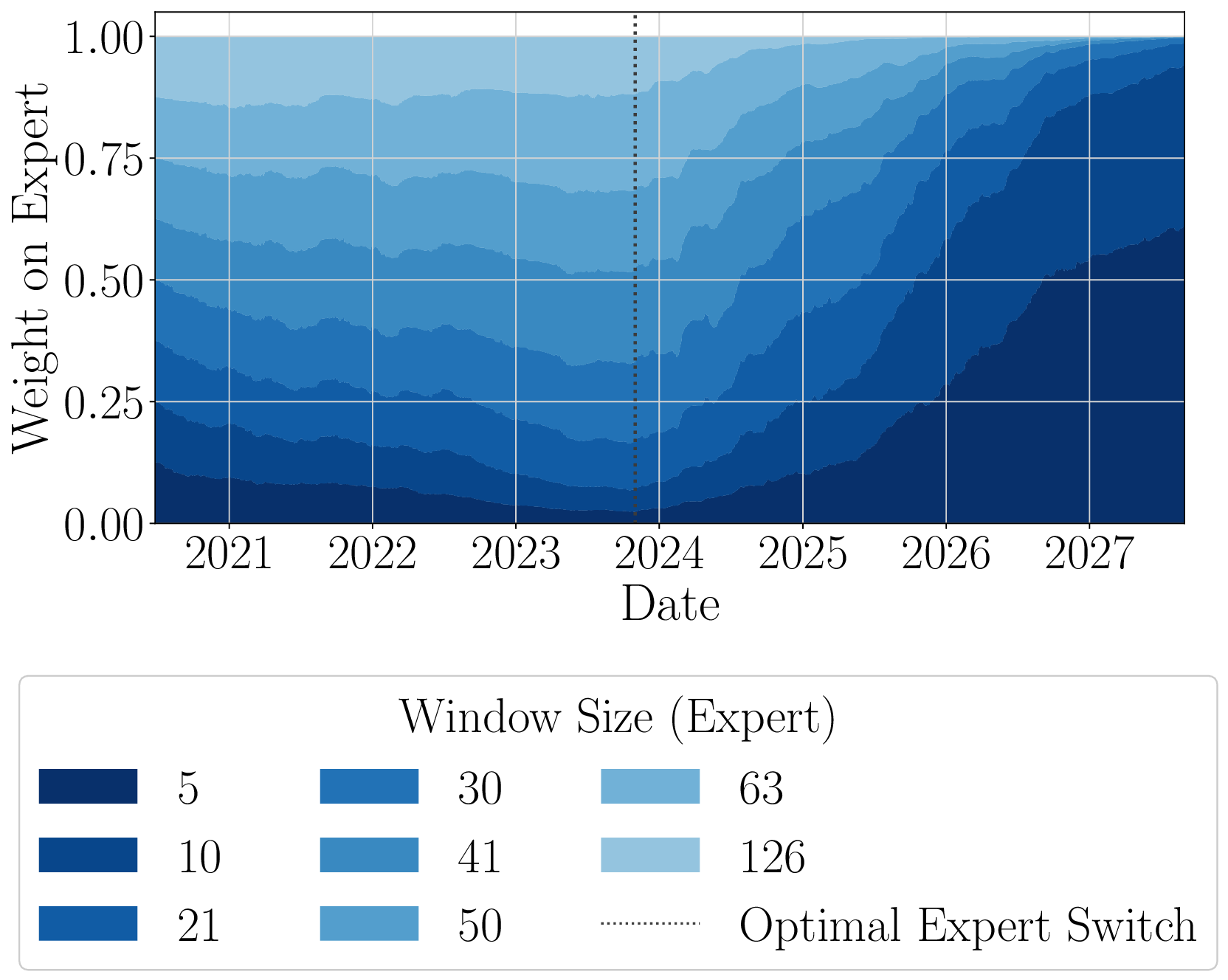}
        \caption{Hedge}
        \label{fig: H q_j syn stacked }
    \end{subfigure}
    \hfill
    \begin{subfigure}[b]{0.48\textwidth}
        \centering
        \includegraphics[width=\textwidth]{ 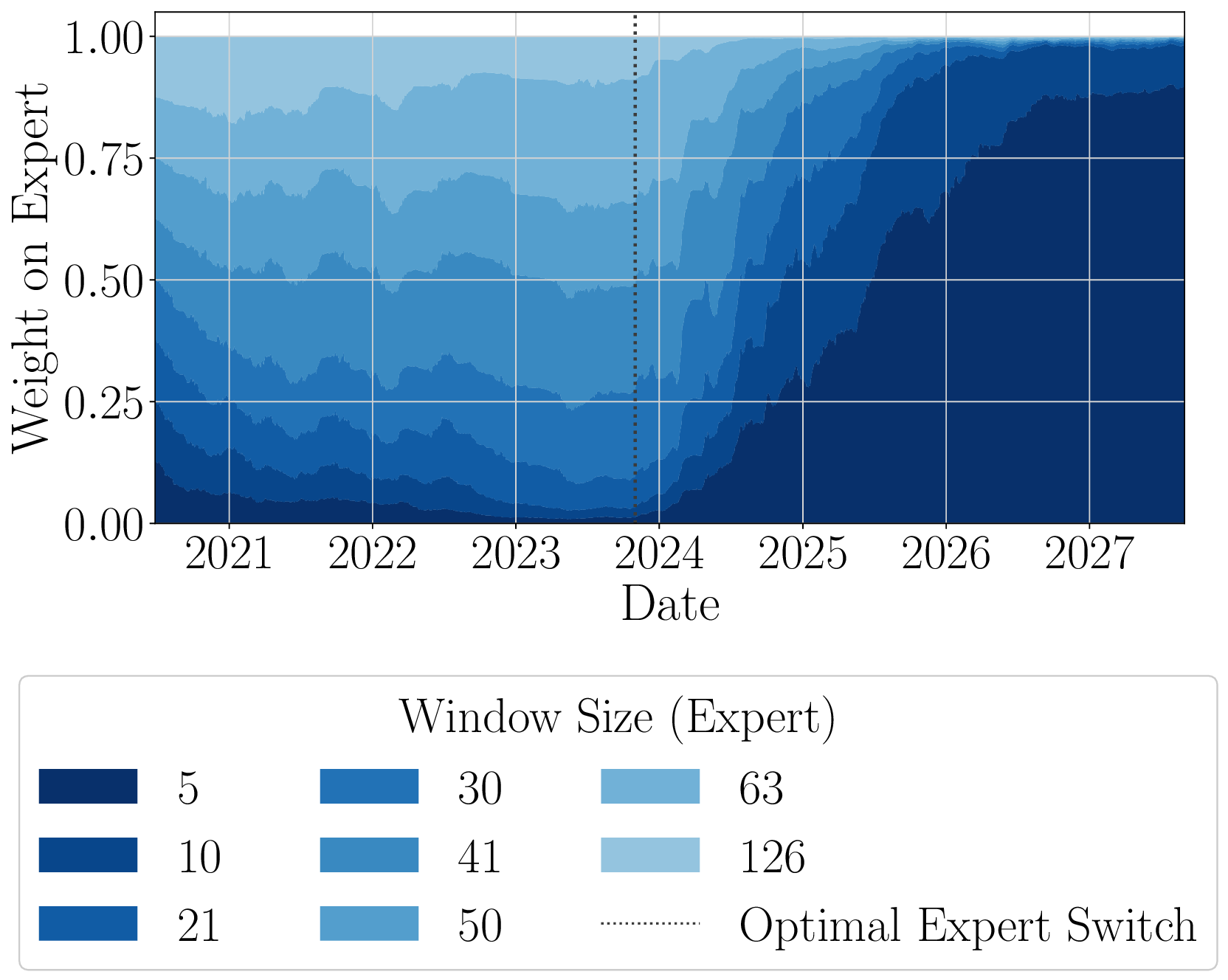}
        \caption{Fixed Share}
        \label{fig: FS q_j syn stacked}
    \end{subfigure}
    \caption{Evolution of the aggregation weights $q_j(t)$ on eight window sizes.}
    \label{fig: q_j syn}
\end{figure}

\clearpage
\subsection{Comparison with Online Portfolio Selection Algorithms}
\label{appendix: portfolio comparisons}
Besides the two-level framework we set, we compare our work with some standard online portfolio selection benchmarks, including UP (\cite{cover1991universal}), EG (\cite{helmbold1998line}), PAMR (\cite{li2012pamr}), CWMR (\cite{li2013confidence}) and OLMAR (\cite{li2014online}. 
    For these comparators, we initialize their parameters to the theoretically optimal values prescribed in their respective foundational papers. Note that these standard configurations were originally derived under frictionless assumptions. We evaluate them under these standard configurations not to claim they are fully optimized for our specific setting, but to illustrate the structural necessity of incorporating cost-sensitive directly into the financial optimization process.
 
Additionally, Figure~\ref{fig: all tracking stock syn} compares our two-level framework against several benchmark algorithms commonly used in online portfolio selection. 
For this comparison, we define $R_{\rm{stock}}(T, K)$ as the tracking regret against the best stock sequence with at most $K$ switches.

Consider the portfolio $\mathcal{M}=\{1, 2, \dots, m\}$ with $m\ge2$. 
We treat each asset $i\in\mathcal{M}$ as an expert. 
Write $\mathbf{i}:=(i_1, \dots, i_T)\in\mathcal{M}^T$ for an arbitrary comparator sequence of experts, where $i_t$ is the expert selected by the comparator at time $t$.
For the switching budget $K$, define 
\begin{align*}
    \mathcal{I}_{T, K}
    :=
    \left\{
    \mathbf{i}\in\mathcal{M}^T:S_T(\mathbf{i})\le K,
    \right\}
\end{align*}
where $S_T(\mathbf i)$ counts the switches in $\mathbf i$.
For a given horizon $T\geq 1$ and a switching budget $K\in\{0,1,\ldots,T-1\}$, the tracking regret $R_{\rm{stock}}$ against stock sequence is defined as
\[
R_{\rm{stock}}(T,K)
:=
\sum_{t=1}^T \ell(\widehat{\mathbf w}(t),\mathbf{y}(t))
-
\min_{\mathbf i\in\mathcal I_{T,K}}
\sum_{t=1}^T \ell(\mathbf w_{i_t}(t),\mathbf{y}(t)).
\]
The minimization over $\mathcal{I}_{T,K}$ selects the best such expert sequence in hindsight.

Consistent with existing literature, the ``experts' advice'' is formulated as a buy-and-hold strategy for individual stocks, and the loss function is the negative logarithmic return. 
Under this configuration, our two-level framework achieves a lower $R_{\rm{stock}}$ than the compared algorithms in this synthetic experiment. 
Negative regret indicates that our framework incurs lower cumulative loss than the best stock sequence with at most $K$ switches.

\begin{figure}[htbp]
    \centering
    \includegraphics[width=.6\linewidth]{ 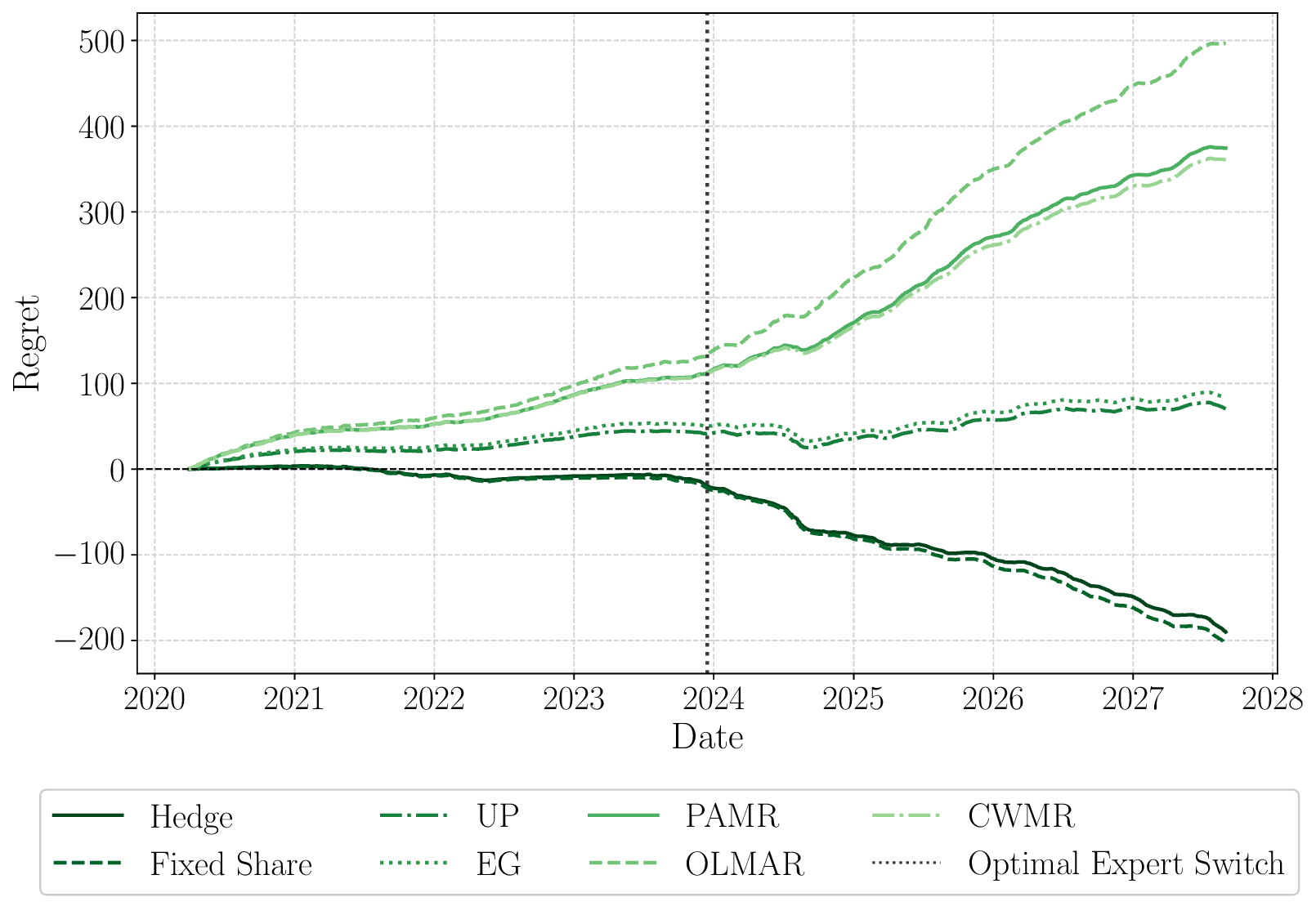}
    \caption{$R_{\rm{stock}}$ for Hedge, Fixed Share, and Comparators with Negative Log Return Loss }
    \label{fig: all tracking stock syn}
\end{figure}

\clearpage
\subsection{Empirical Studies: Dow Jones 30}
\label{subsection: empirical studies: dow 30}
This section presents an empirical study that uses historical data to validate our proposed framework.

\paragraph{The Setup.}
For the portfolio setting, we use the \emph{Dow Jones Industrial Average} and invest directly in its constituent stocks to benchmark performance against the \texttt{DJI} index itself. 
We choose the reference expert set $\mathcal{N}= \{5, 10, 21, 30, 41, 50, 63, 126\}$. 
Because the \texttt{DJI} periodically updates its components, we isolate a period free of any asset additions or removals to simplify our historical data. 
From 2021/03/04 to 2024/02/25, we construct the portfolio using the 30 exact components of the \texttt{DJI} index over that period. 
The portfolio additionally includes \texttt{BIL},
a 1--3 Month Treasury Bill ETF.
The asset tickers are listed in
Appendix~\ref{appendix: Dow Jones 30 tickers}.
We set the transaction cost rate $c=0.001$ (10 bps) for turnover trades.
Since $\alpha^*$ and $\eta^*$ depend on the switching times, we choose a prescribed switching budget $K(T)$ satisfying \eqref{eq: switch time limit}. 
For any constant $\varepsilon>0$, if sufficiently large time horizon $T$ satisfies 
$
\log(T) \big(\log(\log(T))\big)^{1 + \varepsilon}>1,
$
we define the switching budget $K(T)$ as:
$$
K(T) = \left\lfloor\frac{T}{\log(T) \big(\log(\log(T))\big)^{1 + \varepsilon}}\right\rfloor.
$$
Since it follows that:
$$
\lim_{T \to \infty} \frac{\frac{T}{\log(T) (\log(\log(T)))^{1+\varepsilon}}}{\frac{T}{\log(T)\log(\log(T))}} \ = 
\ \lim_{T \to \infty} \frac{1}{\big(\log(\log(T))\big)^\varepsilon}
=0.
$$
In the following studies, we use this $K(T)$ as our switching budget $K$ estimator. 

\paragraph{Performance Evaluation.}

    Figure \ref{fig: H+FS tracking expert 2020-24} presents the performance of Cost-Sensitive Regret evaluated on the Dow Jones Industrial Average (DJIA). 
    As depicted in Figure \ref{fig: q_j 2020-24}, the weight distribution remains largely uniform, with the proportion assigned to $j=126$ being marginally higher than that of the others. 
    Figure \ref{fig: all tracking stock 2020-24} confirms that the $R_{\rm{stock}}$ of our proposed method remains consistently lower than that of the comparative algorithms. 
    Furthermore, Table~\ref{table: performance-1 2020-24} and Figure~\ref{fig: H+FS vt 2020-24} illustrate the trading performance of our method relative to the baselines.  
    Figure~\ref{fig: H+FS vt 2020-24} demonstrates that our two-level framework progressively accumulates a higher account value, even when subjected to these transaction cost penalties. 

\begin{figure}[htbp]
    \centering
    \includegraphics[width=.6\linewidth]{ 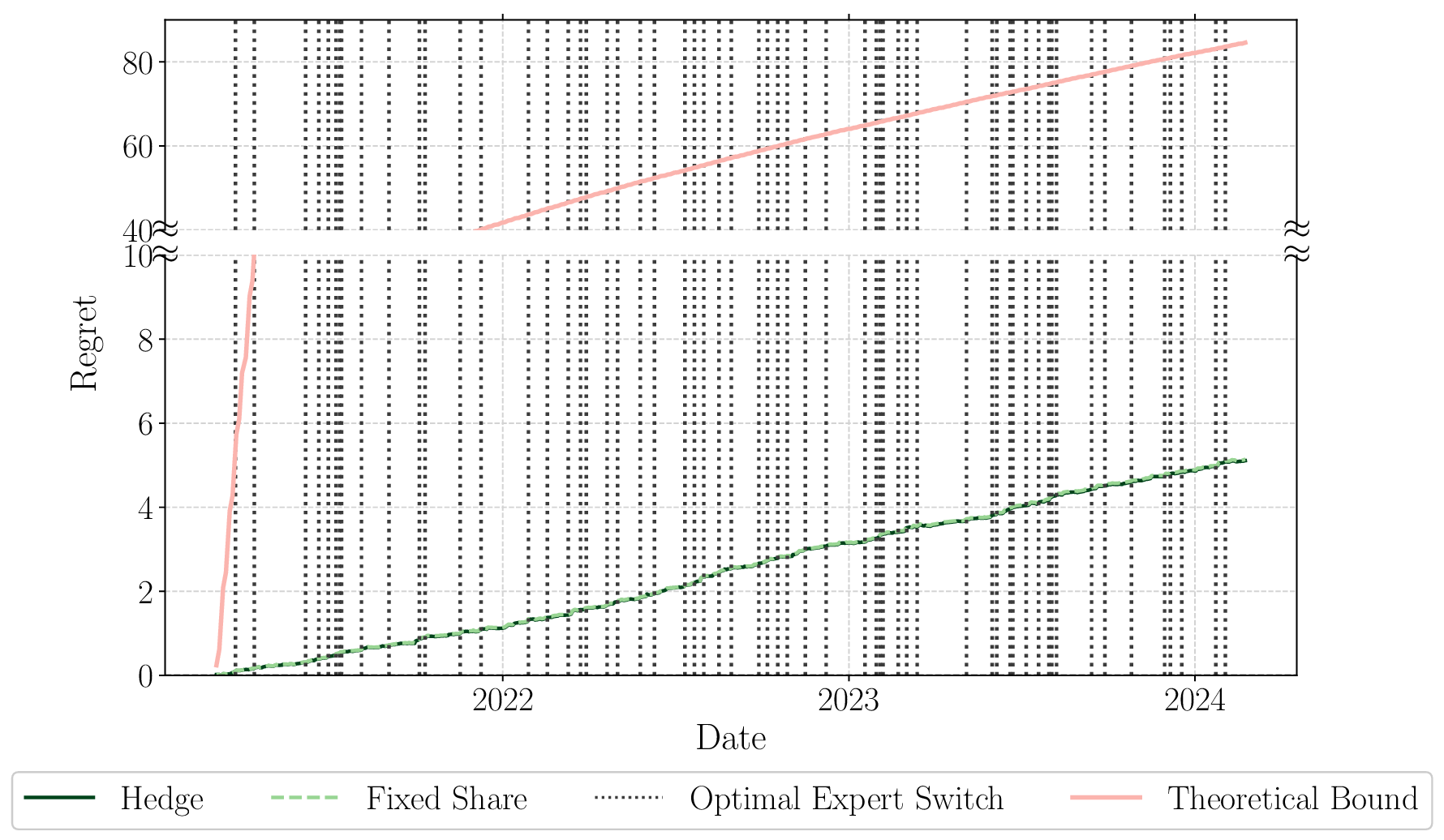}
    \caption{Cost-Sensitive Tracking Regret for Hedge and Fixed Share with Negative PnL Loss (2021/03/04-2024/02/25).}
    \label{fig: H+FS tracking expert 2020-24}
\end{figure}

\begin{figure}[htbp]
    \centering
    \begin{subfigure}[b]{0.45\textwidth}
        \centering
        \includegraphics[width=\textwidth]{ 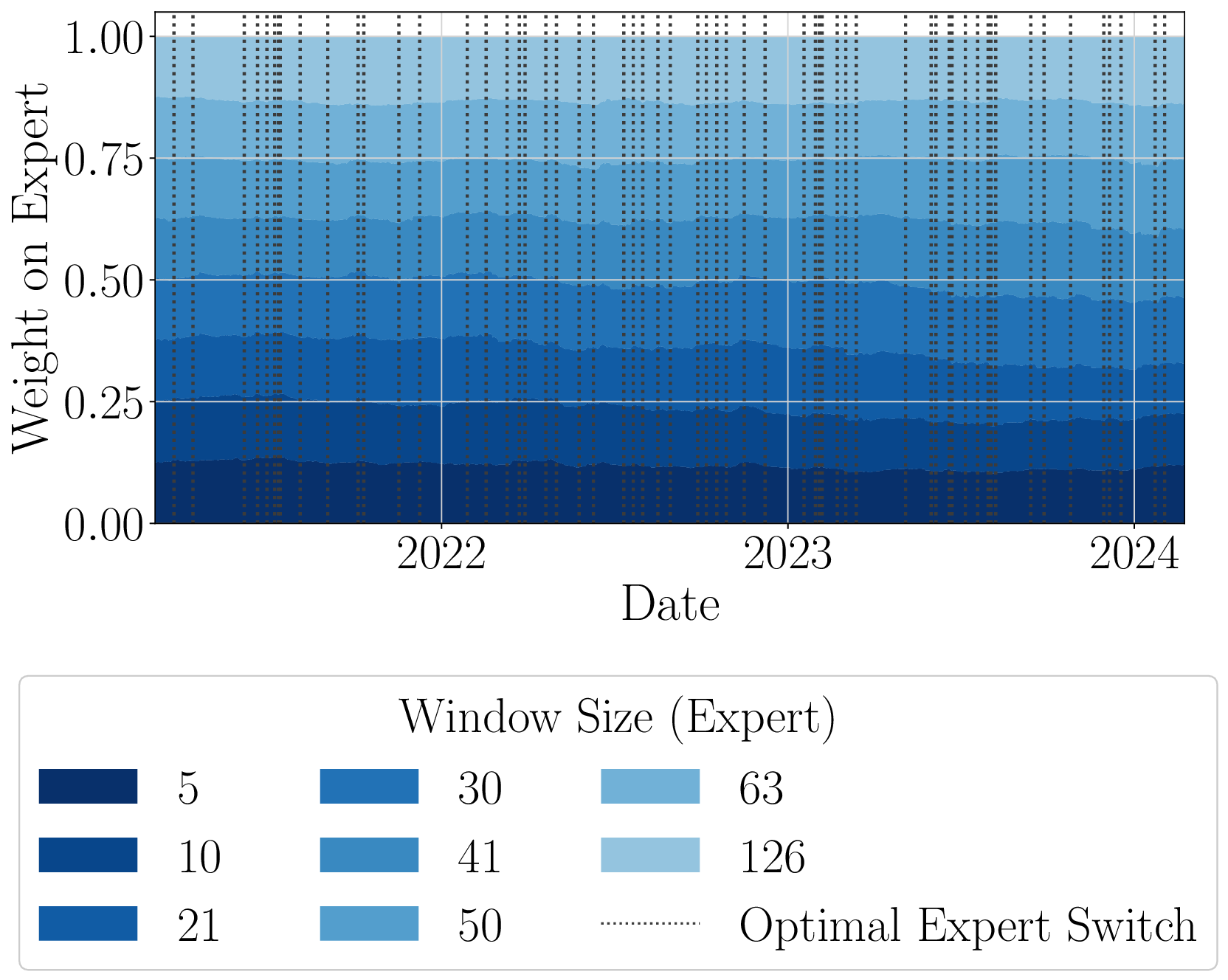}
        \caption{Hedge}
        \label{fig: H q_j 2020-24}
    \end{subfigure}
    \hfill
    \begin{subfigure}[b]{0.45\textwidth}
        \centering
        \includegraphics[width=\textwidth]{ 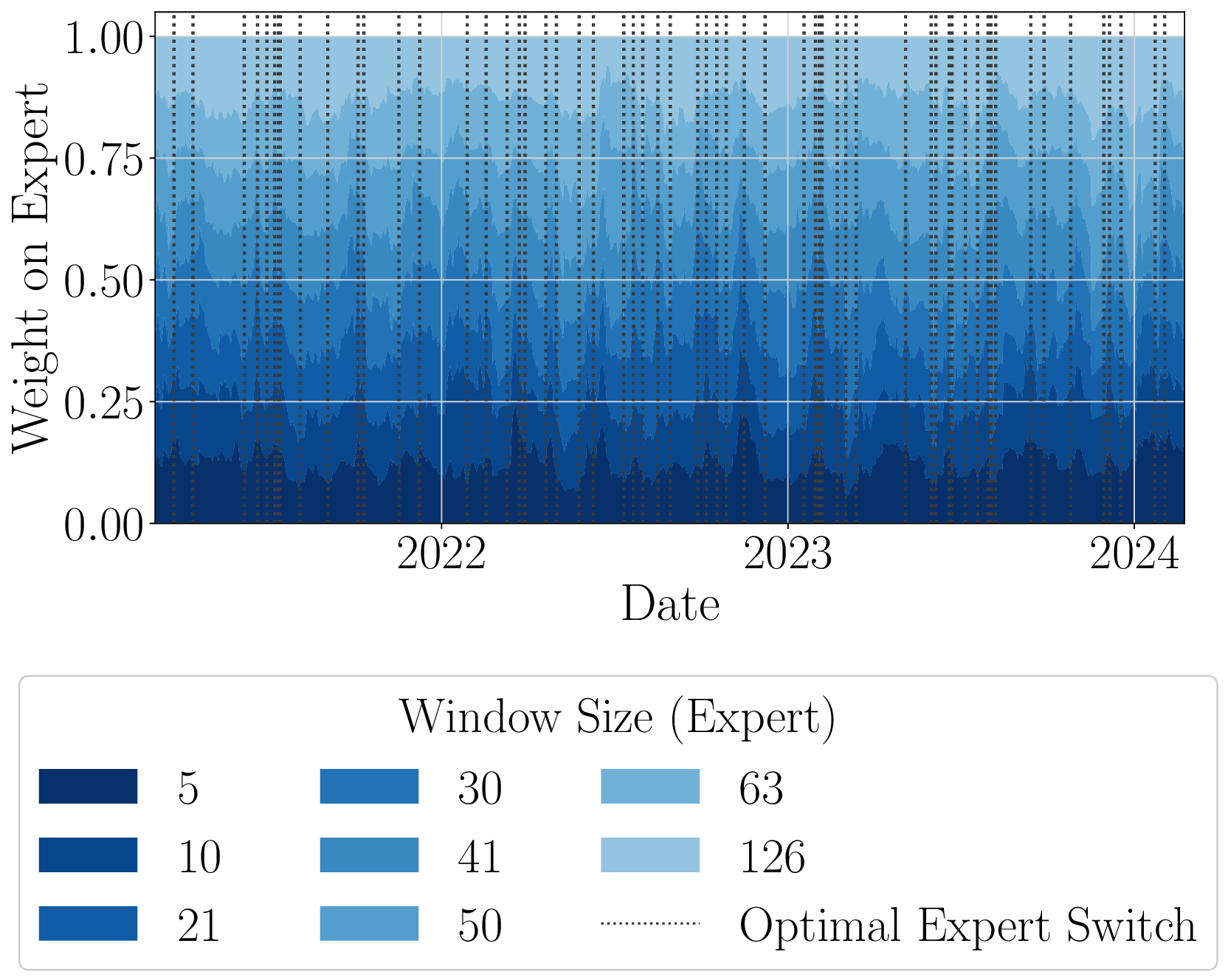}
        \caption{Fixed Share}
        \label{fig: FS q_j 2020-24}
    \end{subfigure}
    \caption{Evolution of the aggregation weights $q_j(t)$ assigned to window-size experts (2021/03/04-2024/02/25).}
    \label{fig: q_j 2020-24}
\end{figure}

\begin{figure}[htbp]
    \centering
    \includegraphics[width=.6\linewidth]{ 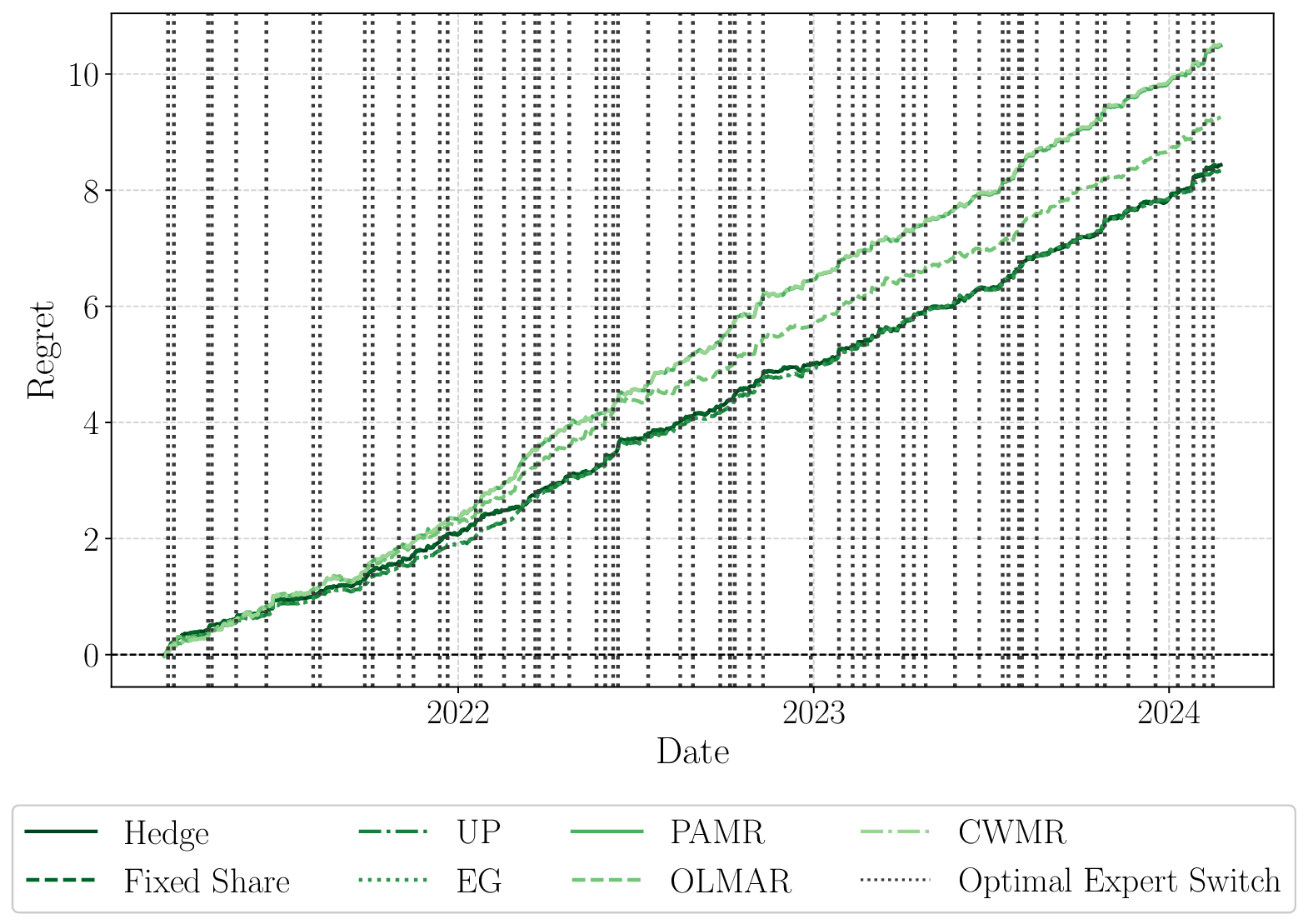}
    \caption{$R_{\rm{stock}}$ for Hedge, Fixed Share, and Comparators with Negative Log Return Loss (2021/03/04-2024/02/25).}
    \label{fig: all tracking stock 2020-24}
\end{figure}

\begin{figure}[htbp]
    \centering
    \includegraphics[width=.6\linewidth]{ 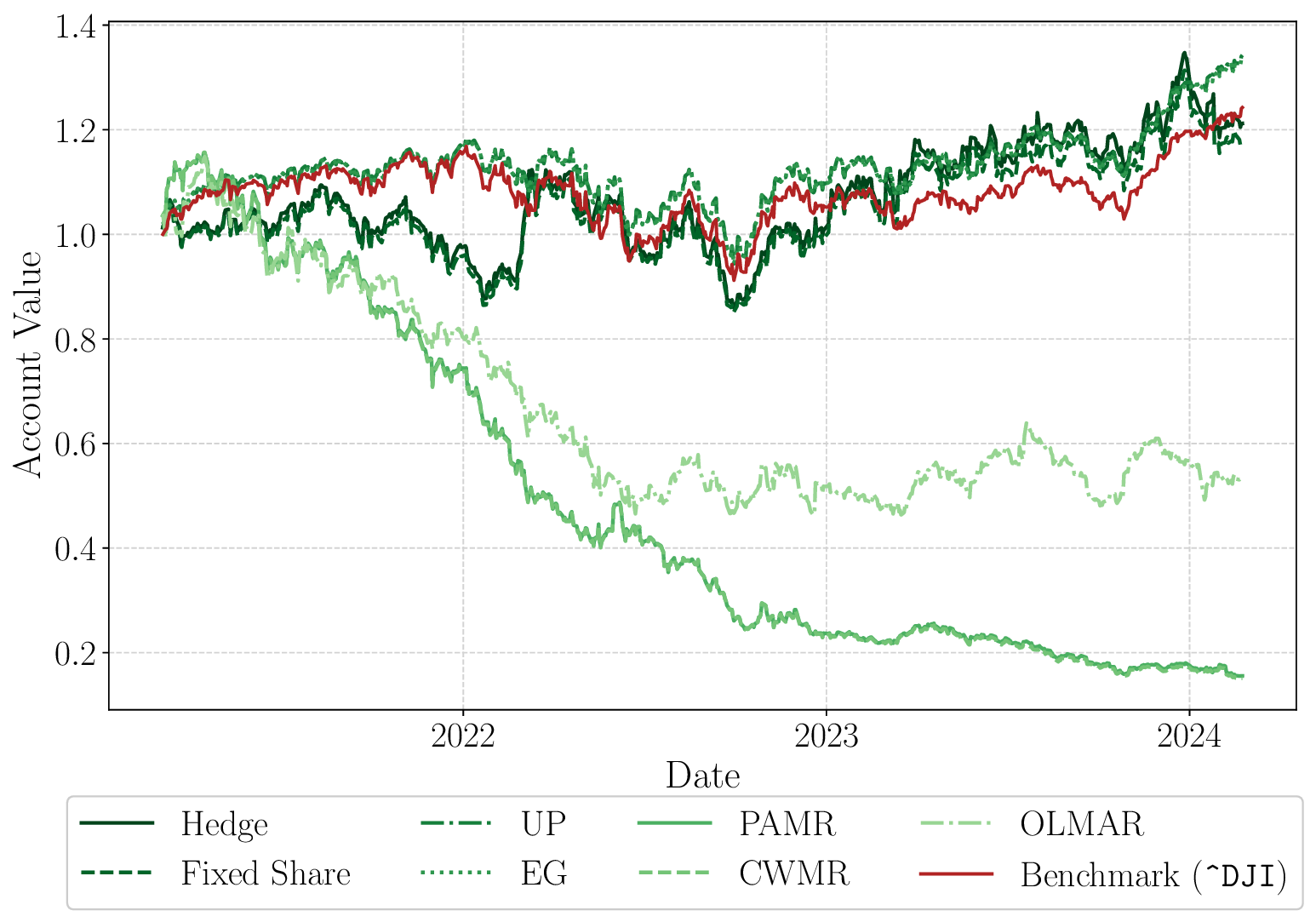}
    \caption{Account Value for Hedge, Fixed Share and benchmark \texttt{DJI}(2021/03/04-2024/02/25).}
    \label{fig: H+FS vt 2020-24}
\end{figure}

\begin{table}[htbp]
    \caption{Trading Performance Metric for different Window Sizes (2021/03/04-2024/02/25).}
    \centering
    \scriptsize
    \begin{tabular}{l r r r r r}
        \toprule
        & 5-day & 10-day & 21-day & 30-day & 41-day\\
        \midrule
        Cumulative Return(\%)  & 98.3533 & 23.8690 & 1.2471 & 45.2568 & 56.7585 \\
        Sample Mean of Return(\%)  & 0.1068 & 0.0426 & 0.0141 & 0.0646 & 0.0734 \\
        Sample Standard Deviation & 0.0175 & 0.0167 & 0.0158 & 0.0172 & 0.0164 \\
        Annualized Sharpe Ratio  & 0.8862 & 0.3170 & 0.0487 & 0.5119 & 0.6226 \\
        Maximum Drawdown(\%)  & -28.9273 & -29.1735 & -33.2589 & -29.5701 & -23.8111 \\
        \bottomrule
    \end{tabular}
    \label{table: performance-1 2020-24}
\end{table}
\begin{table}[htbp]
    \ContinuedFloat
    \caption{Trading performance metrics (continued): longer-window experts, Hedge, and Fixed Share.}
    \centering
    \scriptsize
    \begin{tabular}{l r r r r r}
        \toprule
         & 50-day & 63-day & 126-day & Hedge & Fixed Share\\
        \midrule
        Cumulative Return(\%) & 52.7662 & 9.4500 & 38.8086 & 21.1950 & 17.7014 \\
        Sample Mean of Return(\%) & 0.0719 & 0.0274 & 0.0589 & 0.0338 & 0.0301 \\
        Sample Standard Deviation & 0.0174 & 0.0174 & 0.0173 & 0.0127 & 0.0129 \\
        Annualized Sharpe Ratio & 0.5705 & 0.1651 & 0.4551 & 0.3060 & 0.2568 \\
        Maximum Drawdown(\%) & -29.8611 & -37.9590 & -28.2687 & -23.5123 & -23.0759 \\
        \bottomrule
    \end{tabular}
\end{table}
\begin{table}[htbp]
    \ContinuedFloat
    \caption{Trading performance metrics (continued): online portfolio selection baselines.}
    \centering
    \scriptsize
    \begin{tabular}{l c c c c c}
        \toprule
        & UP & EG & PAMR & CWMR & OLMAR\\
        \midrule
        Cumulative Return(\%) & 48.7250 & 48.2584 & -88.2441 & -88.5860 & -38.9035 \\
        Sample Mean of Return(\%) & 0.0497 & 0.0493 & -0.2262 & -0.2296 & -0.0365 \\
        Sample Standard Deviation & 0.0093 & 0.0092 & 0.0190 & 0.0190 & 0.0200 \\
        Annualized Sharpe Ratio & 0.7165 & 0.7109 & -1.9556 & -1.9840 & -0.3535 \\
        Maximum Drawdown(\%) & -20.2748 & -20.2783 & -88.9822 & -89.3036 & -61.0332 \\
        \bottomrule
    \end{tabular}
\end{table}

\clearpage
\subsection{Empirical Studies: S\&P 500}
\label{subsection: emprical_studies_SP500}
\paragraph{The Setup.}
    Keep the expert setting $\mathcal{N}$ and the transaction cost rate as above. For the portfolio setting, we use the \emph{S\&P 500} (Ticker: \texttt{GSPC}) and invest directly in its constituent stocks to benchmark performance against the \texttt{GSPC} index itself. 
    Because the \texttt{GSPC} periodically updates its components, from 2020/01/02 to 2026/08/01, we remove all the changed stocks and construct our portfolio using the 481 components of the \texttt{GSPC} during that window (Tickers are listed in Appendix~\ref{appendix: SP500 tickers}.) and the risk-free asset \texttt{BIL}, which represents the 1-3 Month T-Bill ETF.

\paragraph{Performance Evaluation.}
    
    Figure \ref{fig: H+FS tracking expert 2020-26} displays the performance of the Cost-Sensitive Regret evaluated on the S\&P 500 index. Consistent with previous observations, Figure \ref{fig: q_j 2020-26} shows a generally uniform weight allocation, with the proportion for $j=126$ remaining slightly elevated. 
    Figure \ref{fig: all tracking stock 2020-26} indicates that our method continues to yield a marginally lower $R_{\rm{stock}}$ compared to the baseline algorithms, while Figure \ref{fig: H+FS vt 2020-26} demonstrates that our method achieves a higher overall account value, and Table \ref{table: performance-1 2020-26} concludes the financial performance.
    
    It is important to note that these empirical studies are intended solely to demonstrate the practical application of our framework over a restricted timeframe, utilizing Hedge and Fixed Share as illustrative examples to account for window sizes and transaction costs. The primary objective is to underscore the necessity of incorporating such considerations, rather than to assert any predictive capability regarding the future performance of specific financial instruments.

\begin{figure}[htbp]
    \centering
    \includegraphics[width=.8\linewidth]{ 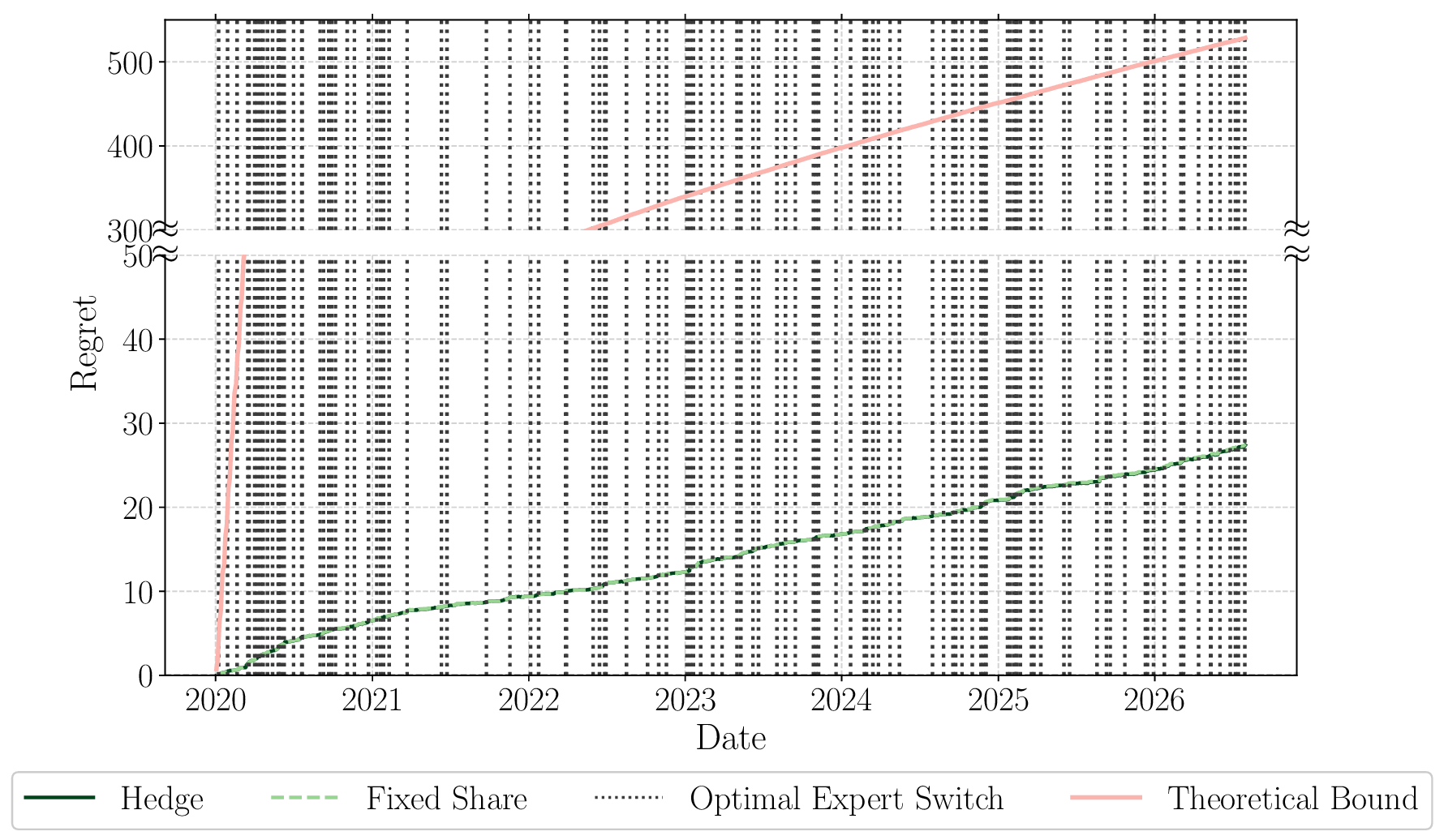}
    \caption{Cost-Sensitive Tracking Regret of Expert for Hedge and Fixed Share with Negative PnL Loss (2020/01/02-2026/08/01).}
    \label{fig: H+FS tracking expert 2020-26}
\end{figure}

\begin{figure}[htbp]
    \centering
    \begin{subfigure}[b]{0.48\textwidth}
        \centering
        \includegraphics[width=\textwidth]{ 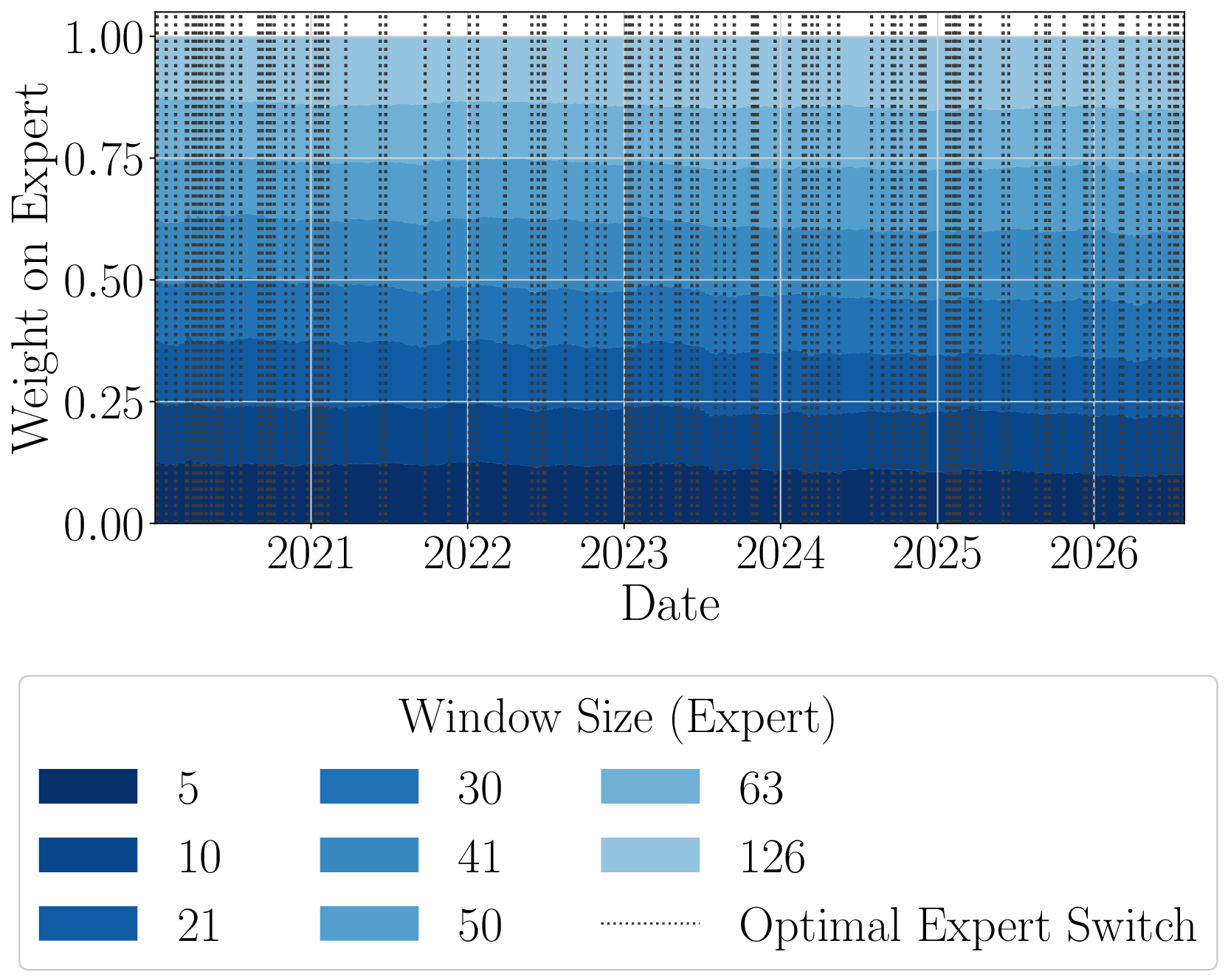}
        \caption{Hedge}
        \label{fig: H q_j 2020-26}
    \end{subfigure}
    \hfill
    \begin{subfigure}[b]{0.48\textwidth}
        \centering
        \includegraphics[width=\textwidth]{ 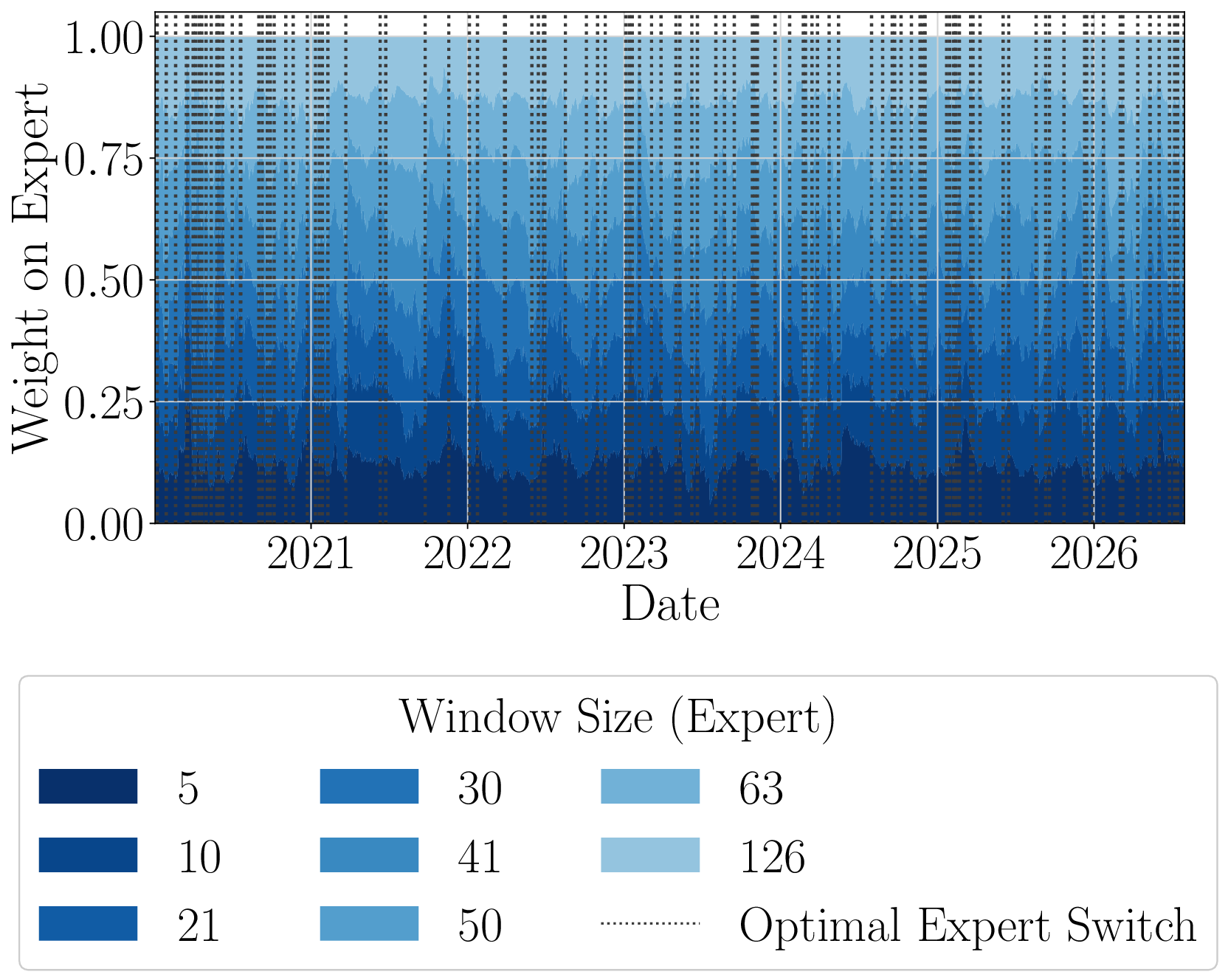}
        \caption{Fixed Share}
        \label{fig: FS q_j 2020-26}
    \end{subfigure}
    \caption{Evolution of the aggregation weights $q_j(t)$ assigned to window-size experts (2020/01/02-2026/08/01).}
    \label{fig: q_j 2020-26}
\end{figure}

\begin{figure}[htbp]
    \centering
    \includegraphics[width=.8\linewidth]{ 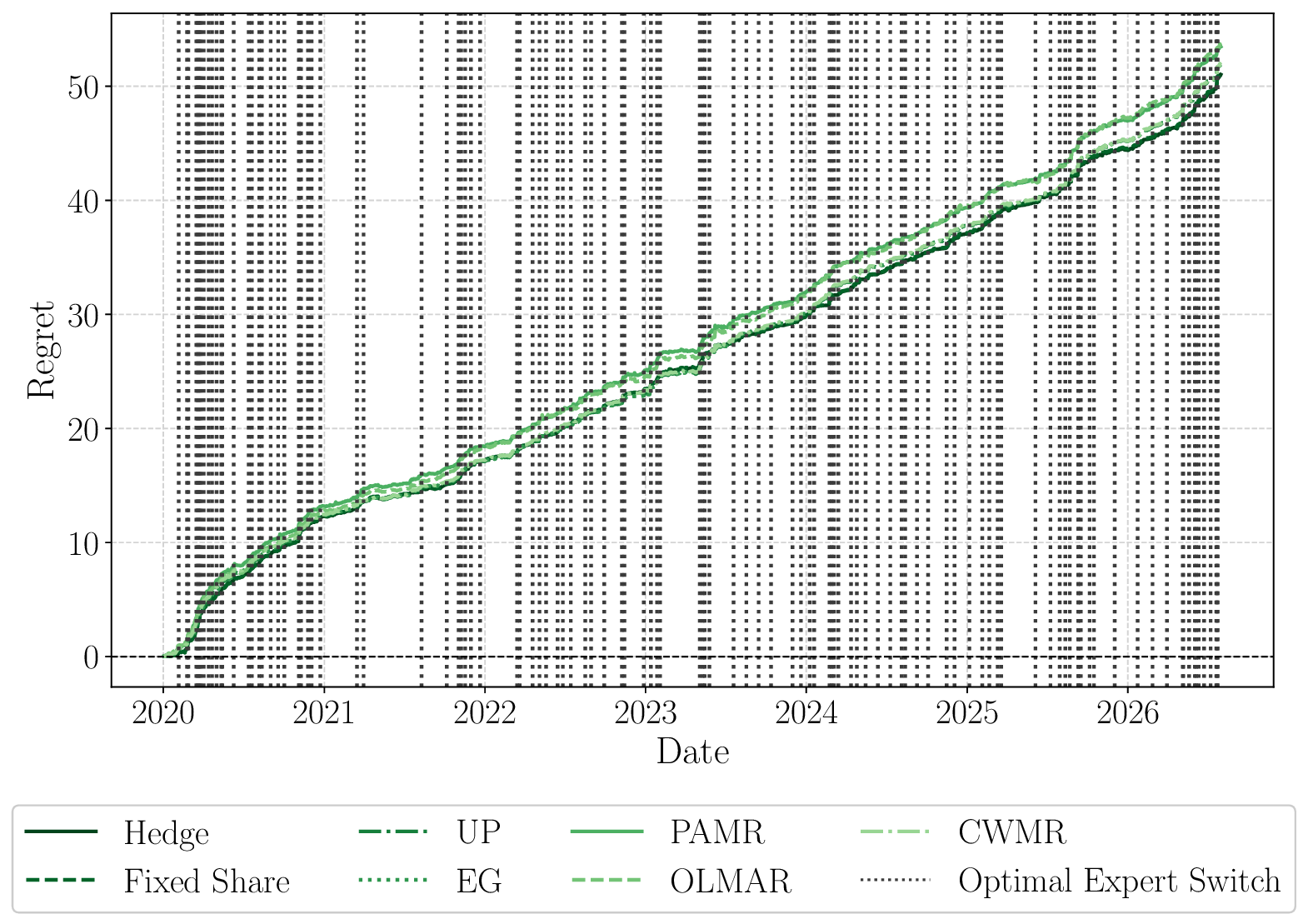}
    \caption{$R_{\rm{stock}}$ for Hedge, Fixed Share, and Comparators with Negative Log Return Loss (2020/01/02-2026/08/01). }
    \label{fig: all tracking stock 2020-26}
\end{figure}

\begin{figure}[htbp]
    \centering
    \includegraphics[width=.8\linewidth]{ 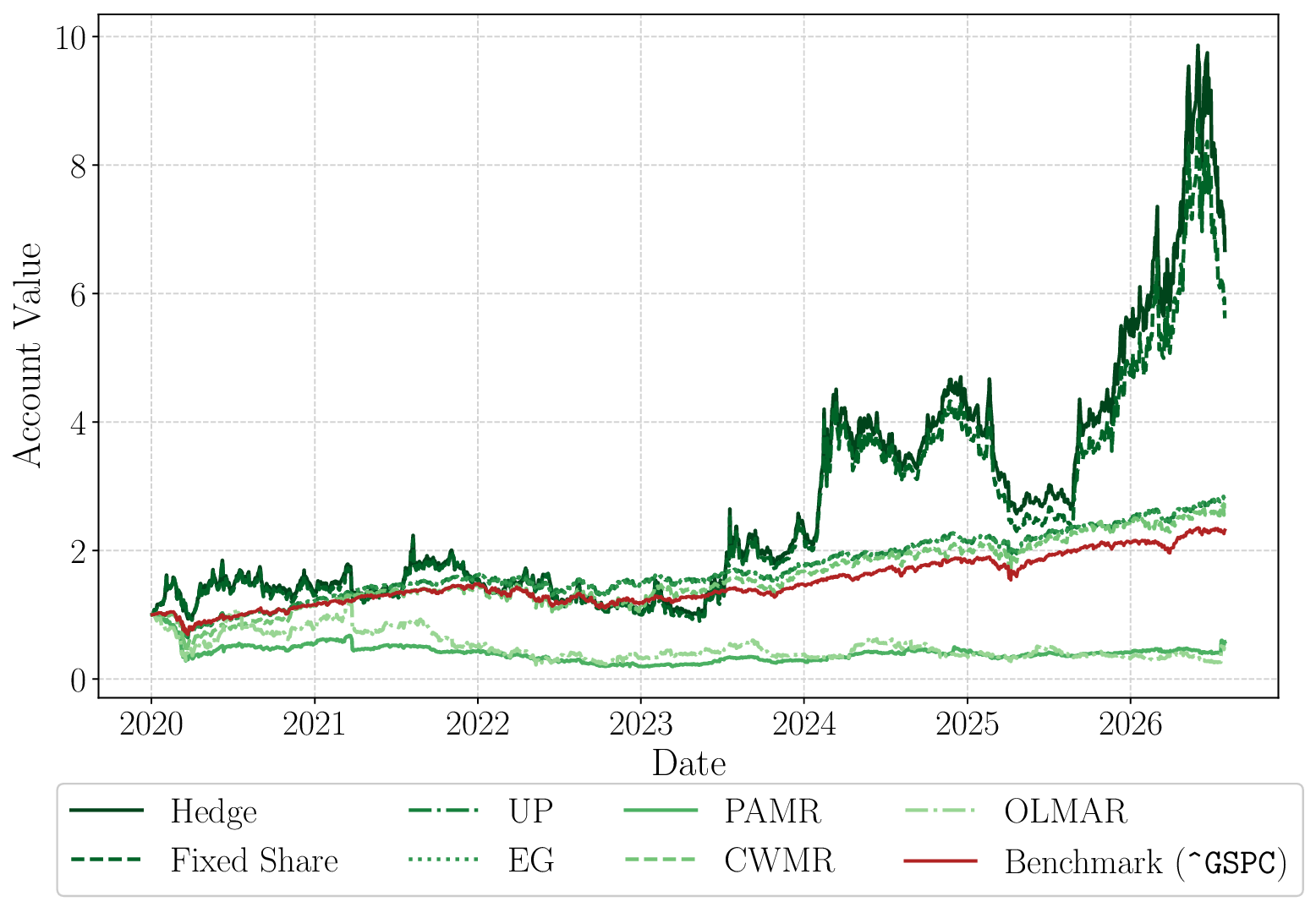}
    \caption{Account Value for Hedge, Fixed Share and benchmark \texttt{GSPC}(2020/01/02-2026/08/01).}
    \label{fig: H+FS vt 2020-26}
\end{figure}

\begin{table}[htbp]
    \caption{Trading Performance Metric for different Window Sizes (2020/01/02-2026/08/01).}
    \centering
    \scriptsize
    \begin{tabular}{l r r r r r}
        \toprule
        & 5-day & 10-day & 21-day & 30-day & 41-day\\
        \midrule
        Cumulative Return(\%)  & 256.0875 & 638.5071 & 361.6039 & 261.9927 & 1046.4484 \\
        Sample Mean of Return(\%)  & 0.1840 & 0.2311 & 0.2013 & 0.1857 & 0.2545 \\
        Sample Standard Deviation & 0.0468 & 0.0476 & 0.0473 & 0.0469 & 0.0464 \\
        Annualized Sharpe Ratio  & 0.5875 & 0.7352 & 0.6400 & 0.5924 & 0.8333 \\
        Maximum Drawdown(\%)  & -62.5065 & -63.2920 & -74.2220 & -76.1463 & -67.7831 \\
        \bottomrule
    \end{tabular}
    \label{table: performance-1 2020-26}
\end{table}
\begin{table}[htbp]
    \ContinuedFloat
    \caption{Trading performance metrics (continued): longer-window experts, Hedge, and Fixed Share.}
    \centering
    \scriptsize
    \begin{tabular}{l r r r r r}
        \toprule
         & 50-day & 63-day & 126-day & Hedge & Fixed Share\\
        \midrule
        Cumulative Return(\%) & 385.4326 & 297.5403 & 1912.9062 & 567.2010 & 461.4739 \\
        Sample Mean of Return(\%) & 0.1995 & 0.1823 & 0.2784 & 0.1806 & 0.1728 \\
        Sample Standard Deviation & 0.0460 & 0.0445 & 0.0441 & 0.0364 & 0.0372 \\
        Annualized Sharpe Ratio & 0.6512 & 0.6117 & 0.9642 & 0.7400 & 0.6918 \\
        Maximum Drawdown(\%) & -78.5995 & -67.4449 & -60.4583 & -56.9601 & -58.5937 \\
        \bottomrule
    \end{tabular}
\end{table}
\begin{table}[htbp]
    \ContinuedFloat
    \caption{Trading performance metrics (continued): online portfolio selection baselines.}
    \centering
    \scriptsize
    \begin{tabular}{l c c c c c}
        \toprule
        & UP & EG & PAMR & CWMR & OLMAR\\
        \midrule
        Cumulative Return(\%) & 205.3887 & 205.7900 & -52.0775 & 205.5993 & -44.0283 \\
        Sample Mean of Return(\%) & 0.0712 & 0.0712 & 0.0094 & 0.0827 & 0.0826 \\
        Sample Standard Deviation & 0.0129 & 0.0129 & 0.0321 & 0.0199 & 0.0501 \\
        Annualized Sharpe Ratio & 0.7456 & 0.7468 & -0.0051 & 0.5775 & 0.2290 \\
        Maximum Drawdown(\%) & -38.3346 & -38.2862 & -84.5912 & -59.5749 & -81.6301 \\
        \bottomrule
    \end{tabular}
\end{table}

\clearpage
\section{Assets Used in the Empirical Studies}

\paragraph{Dow Jones 30.}
\label{appendix: Dow Jones 30 tickers}
The tickers of the Dow Jones 30 stocks considered in Section~\ref{subsection: empirical studies: dow 30} are listed below:
\texttt{WBA, CRM, HON, AMGN, DOW, AAPL, GS, V, NKE, UNH, TRV, CSCO, CVX, VZ, MSFT, HD, INTC, JNJ, WMT, CAT, JPM, DIS, BA, KO, MCD, AXP, IBM, MRK, MMM, PG} and the risk-free asset \texttt{BIL}, which represents the 1-3 Month T-Bill ETF.

\paragraph{S\&P 500.}
\label{appendix: SP500 tickers}
The tickers of the S\&P 500 stocks considered in Section~\ref{subsection: emprical_studies_SP500} are listed below:

\texttt{A, AAPL, ABBV, ABT, ACGL, ACN, ADBE, ADI, ADM, ADP, ADSK, AEE, AEP, AES, AFL, AIG, AIZ, AJG, AKAM, ALB, ALGN, ALL, ALLE, AMAT, AMCR, AMD, AME, AMGN, AMP, AMT, AMZN, ANET, AON, AOS, APA, APD, APH, APO, APTV, ARE, ARES, ATO, AVGO, AVY, AWK, AXON, AXP, AZO, BA, BAC, BALL, BAX, BBY, BDX, BEN, BF-B, BG, BIIB, BKNG, BKR, BLDR, BLK, BMY, BNY, BR, BRK-B, BRO, BSX, BWA, BX, BXP, C, CAG, CAH, CARR, CAT, CB, CBOE, CBRE, CCI, CCL, CDNS, CDW, CE, CEG, CF, CFG, CHD, CHRW, CHTR, CI, CINF, CL, CLX, CMA, CMCSA, CME, CMG, CMI, CMS, CNC, CNP, COF, COO, COP, COR, COST, CPB, CPRT, CPT, CRWD, CSCO, CSGP, CSX, CTAS, CTC, CTRA, CTSH, CTVA, CVS, CVX, CZR, D, DAL, DD, DE, DFS, DG, DGX, DHI, DHR, DIS, DOC, DOV, DOW, DPZ, DRI, DTE, DUK, DVA, DVN, DXCM, EA, EBAY, ECL, ED, EFX, EG, EIX, EL, ELV, EMN, EMR, ENPH, EOG, EPAM, EQT, ERIE, ES, ESS, ETN, ETR, EVRG, EW, EXC, EXPD, EXPE, EXR, F, FANG, FAST, FCX, FDS, FDX, FE, FFIV, FI, FICO, FIS, FITB, FLT, FMC, FOX, FOXA, FRT, FSLR, FTNT, GE, GEHC, GEV, GILD, GIS, GL, GLW, GM, GNRC, GOOG, GOOGL, GPC, GPN, GRMN, GS, GWW, HAL, HAS, HBAN, HCA, HD, HES, HIG, HII, HLT, HOLX, HON, HPE, HPQ, HRL, HSIC, HST, HSY, HUBB, HUM, HWM, IBM, ICE, IDXX, IEX, IFF, ILMN, INCY, INTC, INTU, INVU, IP, IPG, IQV, IR, IRM, ISRG, IT, ITW, IVZ, J, JBHT, JCI, JKHY, JNJ, JNPR, JPM, K, KDP, KEY, KEYS, KHC, KIM, KLAC, KMB, KMI, KMX, KO, KR, KKV, L, LDOS, LEN, LH, LHX, LIN, LKQ, LLY, LMT, LNT, LOW, LRCX, LUV, LVS, LYB, LYV, MA, MAA, MAR, MAS, MCD, MCHP, MCK, MCO, MDLZ, MDT, MET, META, MGM, MHK, MKC, MKTX, MLM, MMC, MMM, MNST, MO, MOH, MOS, MPC, MPWR, MRK, MRO, MS, MSCI, MSFT, MSI, MTB, MTCH, MTD, MU, NCLH, NDAQ, NDSN, NEE, NEM, NFLX, NI, NKE, NOC, NOW, NRG, NSC, NTAP, NTRS, NUE, NVDA, NVR, NWS, NWSA, NXPI, O, ODFL, OKE, OMC, ON, OKE, ORCL, ORLY, OTIS, OXY, PANW, PARA, PAYC, PAYX, PCAR, PCG, PEAK, PEG, PEP, PFE, PFG, PG, PGR, PH, PHM, PKG, PLD, PM, PNC, PNR, PNW, PODD, POOL, PPG, PPL, PRU, PSA, PTC, PTEN, PWR, PXD, PYPL, QCOM, QRVO, RCL, REG, REGN, RF, RHI, RJF, RL, RMD, ROK, ROL, ROP, ROST, RSG, RTX, RVTY, SBUX, SCHW, SHW, SJM, SLB, SMCI, SNA, SNPS, SO, SPG, SPGI, SRE, STE, STLD, STT, STX, STZ, SWK, SWKS, SYF, SYK, SYY, T, TAP, TARE, TDG, TDY, TECH, TEL, TER, TFC, TFX, TGT, TJX, TMO, TMUS, TPR, TRMB, TROW, TRV, TSCO, TSLA, TSN, TT, TTWO, TXN, TXT, TYL, UAL, UBER, UDR, UHS, ULTA, UNH, UNP, UPS, URI, USB, V, VLO, VLTO, VMC, VRSK, VRSN, VRTX, VTR, VZ, WAB, WAT, WBA, WBD, WDC, WEC, WELL, WFC, WHR, WM, WMB, WMT, WRB, WST, WTW, WY, WYNN, XEL, XOM, XYL, XYZ, YUM, ZBH, ZBRA, ZTS})
and the risk-free asset \texttt{BIL}, which represents the 1-3 Month T-Bill ETF.

\end{document}